\newtheorem{lem}{\noindent {\bf Lemma}}[section]
\newtheorem{prop}{\noindent {\bf Proposition}}[section]
\newtheorem{coro}{\noindent {\bf Corollary}}[section]
\newtheorem{thm}{\noindent {\bf Theorem}}[section]
\newcounter{remark}
\newenvironment{remark}{\smallskip\noindent {\bf Remark \arabic{section}.\arabic{remark}.}}
{\addtocounter{remark}{1}\par}
\newcounter{example}
\newenvironment{exa}{\smallskip\noindent {\bf Example \arabic{section}.\arabic{example}.}}
{\addtocounter{example}{1}\par}
\date{}
\newcounter{defi}\setcounter{defi}{1}
\newenvironment{defi}{
\smallskip \noindent
{\bf
  Definition \arabic{section}.\arabic{defi}.
}}{\addtocounter{defi}{1}\par}
\newcommand{\ZZ}{\mathbb Z}
\newcommand{\NN}{\mathbb N}
\newcommand{\RR}{\mathbb R}
\title{\bf Examples of metric spaces with asymptotic property~$C$}
\author{\large  Yan Wu$^\ast$\qquad Jingming Zhu$^{\ast\ast}$
\footnote{
College of Mathematics Physics and Information Engineering, Jiaxing University, Jiaxing , 314001, P.R.China.
$^\ast$ E-mail: yanwu@mail.zjxu.edu.cn $^\ast\ast$ E-mail: jingmingzhu@mail.zjxu.edu.cn }}
\date{}
\begin{document}
\maketitle
\begin{center}
\begin{minipage}{0.9\textwidth}
\noindent{\bf Abstract.}
We construct a class of metric spaces whose transfinite asymptotic dimension and complementary-finite asymptotic dimension are both $\omega+k$ for any $k\in\mathbb{N}$, where $\omega$ is the smallest infinite ordinal number and a metric space whose transfinite asymptotic dimension and complementary-finite asymptotic dimension are both $2\omega$.
Moreover, we study the relationship between asymptotic dimension growth, transfinite asymptotic dimension and finite decomposition complexity.\\
{\bf Keywords } Asymptotic dimension, Transfinite asymptotic dimension, Complementary-finite asymptotic dimension, Asymptotic property C, Asymptotic dimension growth;
\end{minipage}

\end{center}
\footnote{
This research was supported by
the National Natural Science Foundation of China under Grant (No.11871342,11801219)

}

{\bf Acknowledgments.} The author wish to thank the reviewers for careful
reading and valuable comments. This work was supported by NSFC grant of P.R. China (No.11871342,11801219). And the authors want to thank V.M. Manuilov, Jiawen Zhang and Benyin Fu for helpful discussion.

\begin{section}{Introduction}\

Asymptotic dimension introduced by M.Gromov \cite{Gromov} is a fundamental concepts in coarse geometry.
It starts to draw much attention after Yu proved Novikov higher signature conjecture holds for groups with finite asymptotic dimension(FAD). As an excellent survey, G. Bell and A. Dranishnikov summarized the definitions, properties and examples of FAD in \cite{Bell2011}. However, there are many groups and spaces with infinite asymptotic dimension, which contains wreath product $\mathbb{Z}\wr\mathbb{Z}$, Thompson groups, etc. Generalizing FAD, A. Dranishnikov \cite{polynomialdimensiongrowth} defined the asymptotic dimension growth for a metric space; if the asymptotic dimension growth function is eventually constant, then the metric space has FAD. He showed that the wreath product of a finitely generated nilpotent group with a finitely generated FAD group has polynomial asymptotic dimension growth. He also showed that polynomial asymptotic dimension growth implies Yu's Property A and hence the coarse Baum-Connes conjecture, provided the space has bounded geometry (see \cite{Yu}).

As a large scale analogue of Haver's property C \cite{harveC} in dimension theory, A. Dranishnikov introduced the notion of asymptotic property C(APC) in \cite{PropertyC}, which covers a large family of metric spaces with infinite asymptotic dimension. To classifying the metric spaces with APC, T. Radul defined the transfinite asymptotic dimension (trasdim) which can be viewed as a transfinite extension of asymptotic dimension and proved that for a metric space $X$, trasdim$(X)<\infty$ if and only if $X$ has APC in \cite{Radul2010}.  He also gave examples of metric spaces with trasdim$=\infty$ and with trasdim$=\omega$, where $\omega$ is the smallest infinite ordinal number (see \cite{Radul2010}). But whether there is a metric space $X$ with $\omega<$trasdim$(X)<\infty$ (stated as"omega conjecture"in \cite{satkiewicz} by M. Satkiewicz) is unknown until recently (see \cite{omega+1}).

Inspired by the technique in \cite{omega+1},
we construct a class of metric spaces $X_{\omega+k}$ with
\[
\text{trasdim}(X_{\omega+k})=\text{coasdim}(X_{\omega+k})=\omega+k \text{ for any } k\in \mathbb{N},
\]
which generalized the result in \cite{omega+1}. And  we use these metric spaces to construct a metric space $Y_{2\omega}$ with trasdim$(Y_{2\omega})=$coasdim$(Y_{2\omega})=2\omega$. \

Both asymptotic dimension growth and transfinite asymptotic dimension are concepts generalizing asymptotic dimension. However, for any unbounded increasing function $g$, we construct a metric space $X_{\omega}(g)$ such that
\[
\text{trasdim$(X_{\omega}(g))=\omega$, $X_{\omega}(g)\in\mathcal{D}_{\omega}$ and $ad_{X_{\omega}(g)}(r)\geq g(r)$ for every $r>0$.}
\]
After estimating the asymptotic dimension growth of $Y_{2\omega}$,
we find a metric space $X_{\omega}(g)$ such that
\[
\text{trasdim$(Y_{2\omega})=$2trasdim$(X_{\omega}(g))$ and $ad_{X_{\omega}(g)}(r)> ad_{Y_{2\omega}}(r)$ for every $r>0$,}
\]
which shows that transfinite asymptotic dimension and the exact finite decomposition complexity are independent of asymptotic dimension growth.

The paper is organized as follows. In Section 2, we give some preliminaries
on asymptotic dimension and complementary-finite asymptotic dimension. Section 3 and Section 4 deal with the construction of metric spaces with infinite asymptotic dimension: we introduce a concrete metric space $X_{\omega+k}$ and prove that transfinite asymptotic dimension and complementary-finite asymptotic dimension of $X_{\omega+k}$ are both $\omega+k$ which are essential for the construction of the metric space $Y_{2\omega}$ with trasdim$(Y_{2\omega})=$coasdim$(Y_{2\omega})=2\omega$. In Section 5,
we show that asymptotic dimension growth has nothing to do with transfinite asymptotic dimension by examples and similar result holds for finite decomposition complexity and asymptotic dimension growth.
\end{section}

\begin{section}{Preliminaries}\

The notion of asymptotic dimension was first introduced by Gromov in 1992 \cite{Gromov} as a coarse analogue of the classical Lebesgue topological covering dimension.
Our terminology concerning the asymptotic dimension follows from \cite{Bell2011} and for undefined terminology we refer to \cite{Radul2010}. \

Let~$(X, d)$ be a metric space and $U,V\subseteq X$, let
\[
\text{diam}~ U=\text{sup}\{d(x,y)| x,y\in U\}
\text{   and   }
d(U,V)=\text{inf}\{d(x,y)| x\in U,y\in V\}.
\]
Let $R>0$ and $\mathcal{U}$ be a family of subsets of $X$. $\mathcal{U}$ is said to be \emph{$R$-bounded} if
\[
\text{diam}~\mathcal{U}\doteq\text{sup}\{\text{diam}~ U~|~ U\in \mathcal{U}\}\leq R.
\]
In this case, $\mathcal{U}$ is said to be \emph{uniformly bounded}.
For $r>0$, a family $\mathcal{U}$ is said to be\emph{ $r$-disjoint} if
\[
d(U,V)> r~\text{for every}~ U,V\in \mathcal{U}\text{~with~}U\neq V.
\]

In this paper, we denote
$\bigcup\{U~|~U\in\mathcal{U}\}$ by $\bigcup\mathcal{U}$ and denote $\{U~|~U\in\mathcal{U}_{1}\text{~or~}~U\in\mathcal{U}_{2}\}$
by $\mathcal{U}_{1}\cup\mathcal{U}_{2}$.
Let $A$ be a subset of $X$, we denote $\{x\in X|d(x,A)<\epsilon\}$ by $N_{\epsilon}(A)$, denote $\{x\in X|d(x,A)\leq\epsilon\}$ by $\overline{N_{\epsilon}(A)}$ for some $\epsilon>0$.
We also denote $\{N_{\delta}(U)~|~U\in\mathcal{U}\}$ by $N_{\delta}(\mathcal{U})$ for some $\delta>0$.\

\begin{defi}
A metric space $X$ is said to have \emph{finite asymptotic dimension} if
there exists $n\in\NN$, such that for every $r>0$,
there exists a sequence of uniformly bounded families
$\{\mathcal{U}_{i}\}_{i=0}^{n}$ of subsets of $X$
such that the family
$\bigcup_{i=0}^{n}\mathcal{U}_{i}$ covers $X$ and each $\mathcal{U}_{i}$
is $r$-disjoint for $i=0,1,\cdots,n$. In this case, we say that
the asymptotic dimension of $X$ less than or equal to $n$, which is denoted by
 asdim$(X)\leq n$.

We say that asdim$(X)= n$ if  asdim$(X)\leq n$ and asdim$(X)\leq n-1$ is not true.
\end{defi}
\begin{remark}
There are other equivalent definitions of asymptotic
dimension. We guide the reader to \cite{Bell2011} for others.
\end{remark}

\begin{exa}\cite{Bell2011}

(1) asdim$(\mathbb{R}^n) =n$ for every $n \in\mathbb{N}$.\

(2) asdim$((m\mathbb{\mathbb{Z}})^n) =n$ for every $n,m \in\mathbb{N}$.\
\end{exa}

To classifying the spaces with infinite asymptotic dimension,
T. Radul generalized asymptotic dimension of a metric space $X$ to transfinite asymptotic dimension which is denoted by trasdim$(X)$ (see \cite{Radul2010}).
\begin{defi}
Let $Fin\mathbb{N}$ be the collection of all finite, nonempty
subsets of $\mathbb{N}$ and let $ M \subseteq Fin\mathbb{N}$. For $\sigma\in \{\varnothing\}\bigcup Fin\mathbb{N}$, let
$$M^{\sigma} = \{\tau\in Fin\mathbb{N} ~|~ \tau \cup \sigma \in M \text{ and } \tau \cap \sigma = \varnothing\}.$$

Let $M^a$ abbreviate $M^{\{a\}}$ for $a \in \NN$. Define the ordinal number Ord$M$ inductively as follows:
\begin{eqnarray*}
\text{Ord}M = 0 &\Leftrightarrow& M = \varnothing,\\
\text{Ord}M \leq \alpha &\Leftrightarrow& \forall~ a\in \mathbb{N}, ~\text{Ord}M^a < \alpha,\\
\text{Ord}M = \alpha &\Leftrightarrow& \text{Ord}M \leq \alpha \text{ and } \text{Ord}M < \alpha \text{ is not true},\\
\text{Ord}M = \infty &\Leftrightarrow& \text{Ord}M \leq\alpha \text{ is not true for every ordinal number } \alpha.
\end{eqnarray*}

\end{defi}
\begin{remark}
Note that if $ M,N\subseteq Fin\mathbb{N}$ and $M\subseteq N$, then $\text{Ord}M\leq\text{Ord}N.$
\end{remark}
\begin{defi}\cite{Radul2010}
Given a metric space $X$, let
\[
\begin{split}
A(X) = \{\sigma \in Fin\mathbb{N}~ |~&\text{ there are no uniformly bounded families } \mathcal{U}_i  \text{ for } i \in \sigma
 \\& \text{ such that each } \mathcal{U}_i
\text{ is } i\text{-disjoint and }\bigcup_{i\in\sigma}\mathcal{U}_i \text{~covers~} X\}.
\end{split}\]

The \emph{transfinite asymptotic dimension} of $X$ is defined as trasdim$(X)$=Ord$A(X)$.
\end{defi}
\begin{remark}
Note that trasdim$(X)\leq n$ if and only if asdim$(X)\leq n$.
\end{remark}
\begin{lem}\rm(see \cite{yanzhu2018}, Proposition 2.1)
\label{lem:trasdim}
Let $X$ be a metric space and let $l\in \mathbb{N}\cup \{0\}$, then the following conditions are equivalent:
\begin{itemize}
\item[\rm(1)] trasdim$(X) \leq \omega+l$;

\item[\rm(2)] For every $k\in\NN$, there exists $m=m(k)\in\NN$ such that for every~$n\in\NN$, there are uniformly bounded families $\mathcal{U}_{-l},\mathcal{U}_{-l+1},\cdots,\mathcal{U}_{m}$ satisfying $\mathcal{U}_i$ is $k$-disjoint for $i\in\{-l,\cdots, 0\}$, $\mathcal{U}_j$ is $n$-disjoint for $j\in\{1,2,\cdots, m\}$ and
$\bigcup_{i=-l}^{m}\mathcal{U}_i$ covers $X$.
\end{itemize}
\end{lem}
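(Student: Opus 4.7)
The plan is to unpack the ordinal definition of $\text{trasdim}$ into the combinatorial covering language of the lemma. The key tool is the iterated unfolding of the recursion $\text{Ord}M\leq\alpha \Leftrightarrow \forall a\in\mathbb{N},\ \text{Ord}M^{a}<\alpha$: applied $l+1$ times this yields that $\text{Ord}A(X)\leq\omega+l$ if and only if $\text{Ord}A(X)^{\{a_{1},\ldots,a_{l+1}\}}<\omega$ for all distinct $a_{1},\ldots,a_{l+1}\in\mathbb{N}$. Combined with the elementary observation that $\text{Ord}M\leq p$ for finite $p$ is equivalent to every $\tau\in M$ having $|\tau|\leq p$, this means the hypothesis provides a finite bound on the sizes of witnesses for each distinct tuple. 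I will also use two easy monotonicity properties of $A(X)$: (a) if $\sigma\notin A(X)$ and $\sigma\subseteq\sigma'$ then $\sigma'\notin A(X)$ (pad with empty families); (b) any $j$-disjoint family is automatically $i$-disjoint whenever $i\leq j$, so indices on families may be relabelled downward as long as they stay pairwise distinct.

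For $(1)\Rightarrow(2)$, fix $k\in\mathbb{N}$ and apply the characterization to $\sigma_{0}:=\{k,k+1,\ldots,k+l\}$ to obtain a finite $p=p(k)$ with $\text{Ord}A(X)^{\sigma_{0}}\leq p$; set $m:=p+1$. For every $n>k+l$ the set $\tau:=\{n,n+1,\ldots,n+m-1\}$ is disjoint from $\sigma_{0}$ and has size $m>p$, hence $\tau\notin A(X)^{\sigma_{0}}$, i.e.\ $\tau\cup\sigma_{0}\notin A(X)$. This produces uniformly bounded families $\{\mathcal{U}_{i}\}_{i\in\tau\cup\sigma_{0}}$ covering $X$ with each $\mathcal{U}_{i}$ being $i$-disjoint; after renaming, the $l+1$ families indexed by $\sigma_{0}$ play the role of $\mathcal{U}_{-l},\ldots,\mathcal{U}_{0}$ (automatically $k$-disjoint since each is $i$-disjoint for some $i\geq k$), while the $m$ families indexed by $\tau$ play the role of $\mathcal{U}_{1},\ldots,\mathcal{U}_{m}$ (automatically $n$-disjoint). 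For the small values $n\leq k+l$ one simply reuses the covering obtained for $n=k+l+1$.

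For $(2)\Rightarrow(1)$, fix distinct $a_{1},\ldots,a_{l+1}\in\mathbb{N}$, set $k:=\max_{i}a_{i}$, and let $m=m(k)$ be the constant from (2). I claim $\text{Ord}A(X)^{\{a_{1},\ldots,a_{l+1}\}}\leq m-1$, which by the iterated characterization yields $\text{trasdim}(X)\leq\omega+l$. To see this, let $\tau\in Fin\mathbb{N}$ be disjoint from $\{a_{1},\ldots,a_{l+1}\}$ with $|\tau|\geq m$, and pick $\tau'\subseteq\tau$ with $|\tau'|=m$; set $n:=\max\tau'$. By (2) there exist uniformly bounded families $\mathcal{U}_{-l},\ldots,\mathcal{U}_{m}$ covering $X$ with the first $l+1$ being $k$-disjoint and the last $m$ being $n$-disjoint. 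Relabel the first $l+1$ by $a_{1},\ldots,a_{l+1}$ (legitimate by~(b) since $a_{i}\leq k$) and the last $m$ by the elements of $\tau'$ (legitimate since each is $\leq n$). Because $\tau'\cap\{a_{1},\ldots,a_{l+1}\}=\varnothing$, the indices stay distinct, so this witnesses $\tau'\cup\{a_{1},\ldots,a_{l+1}\}\notin A(X)$; applying~(a) gives $\tau\cup\{a_{1},\ldots,a_{l+1}\}\notin A(X)$, as required.

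The only mildly delicate point is the bookkeeping: juggling the natural-number disjointness labels appearing inside $A(X)$ against the lemma's bookkeeping indices $-l,\ldots,m$, and carrying out the short induction on $l$ needed for the iterated characterization of $\text{Ord}M\leq\omega+l$. Beyond this indexing care, the argument is a direct unpacking of definitions.
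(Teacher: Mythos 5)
The paper offers no proof of this lemma---it is quoted from \cite{yanzhu2018} (Proposition 2.1)---so there is no in-text argument to compare against; judged on its own, your proof is correct and is the standard unfolding argument one would expect the cited source to use. The iterated characterization you rely on is exactly Lemma \ref{lemmaOrd} of this paper specialized to $\alpha=\omega$ and $k=l$; your finite-rank observation ($\text{Ord}M\leq p$ for finite $p$ iff every $\tau\in M$ has $|\tau|\leq p$) is sound precisely because elements of $Fin\mathbb{N}$ are nonempty, and the two relabelling devices (padding with empty families, and downgrading a $j$-disjoint family to $i$-disjoint for $i\leq j$) legitimately handle all the index bookkeeping in both directions, including the uniformity of $m$ in $n$ and the small cases $n\leq k+l$.
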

In \cite{yanzhu2018}, we introduced another approach to classify the metric spaces with infinite asymptotic dimension, which is called complementary-finite asymptotic dimension (coasdim).

\begin{defi}\cite{yanzhu2018}
Every ordinal number $\gamma$ can be represented as $\gamma=\lambda(\gamma)+n(\gamma)$, where $\lambda(\gamma)$ is the limit ordinal or $0$ and $n(\gamma)\in \mathbb{N}\cup \{0\}$. Let $X$ be a metric space, we define \emph{complementary-finite asymptotic dimension} coasdim$(X)$ inductively as follows:
\begin{itemize}
\item\text{coasdim}$(X)=-1$ iff $X=\emptyset$,

\item \text{coasdim}$(X)\leq \gamma$ iff for every $r>0$ there exist $r$-disjoint uniformly bounded families $\mathcal{U}_0,\ldots,\mathcal{U}_{n(\gamma)}$ of subsets of $X$ such that \text{coasdim}$(X\setminus \bigcup(\bigcup_{i=0}^{n(\gamma)}\mathcal{U}_i))<\lambda(\gamma)$,

\item \text{coasdim}$(X)=\gamma$ iff \text{coasdim}$(X)\leq \gamma$ and for every $\beta<\gamma$,
\text{coasdim}$(X)\leq\beta$ is not true.

\item \text{coasdim}$(X)=\infty$ iff for every ordinal number $\gamma$, \text{coasdim}$(X)\leq \gamma$ is not true.
\end{itemize}
$X$ is said to have \emph{complementary-finite asymptotic dimension} if coasdim$(X)\leq \gamma$ for some ordinal number $\gamma$.
\end{defi}
\begin{remark}
Note that coasdim$(X)\leq n$ if and only if asdim$(X)\leq n$.
\end{remark}
\begin{lem}\rm(see~\cite{yanzhu2018}, Theorem 3.3)
\label{lem:coasdim1}
Let $X$ be a metric space with $X_{1},X_{2}\subseteq X$, the
 coasdim$(X_{1}\cup X_{2})\leq \text{max}\{\text{coasdim}(X_{1}),\text{coasdim}(X_{2})\}$.
\end{lem}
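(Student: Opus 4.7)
I would prove the lemma by transfinite induction on $\gamma=\max\{\text{coasdim}(X_{1}),\text{coasdim}(X_{2})\}$. The base case $\gamma<\omega$ is immediate from the preceding remark (coasdim$\leq n$ iff asdim$\leq n$) together with the classical Finite Union Theorem for asymptotic dimension \cite{Bell2011}. A short preliminary observation, proved by a parallel transfinite induction, is the monotonicity statement $Y\subseteq Z\Rightarrow\text{coasdim}(Y)\leq\text{coasdim}(Z)$, obtained by restricting to $Y$ the families that witness coasdim$(Z)\leq\gamma$ and noting that the residual for $Y$ is a subspace of the residual for $Z$.

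For the inductive step, fix $\gamma\geq\omega$ and decompose $\gamma=\lambda(\gamma)+n(\gamma)$, so that $\lambda(\gamma)$ is a nonzero limit ordinal. Given $r>0$, I would first invoke coasdim$(X_{2})\leq\gamma$ at scale $r$ to produce $r$-disjoint uniformly bounded families $\mathcal{V}_{0},\ldots,\mathcal{V}_{n(\gamma)}$ with residual $B=X_{2}\setminus\bigcup_{i}\bigcup\mathcal{V}_{i}$ of coasdim $<\lambda(\gamma)$; set $D=\max_{i}\diam(\mathcal{V}_{i})$ and choose an inflated scale $r'>3r+2D$. Next invoke coasdim$(X_{1})\leq\gamma$ at the larger scale $r'$ to produce $r'$-disjoint uniformly bounded families $\mathcal{U}_{0},\ldots,\mathcal{U}_{n(\gamma)}$ with residual $A\subseteq X_{1}$ of coasdim $<\lambda(\gamma)$. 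The heart of the argument is the standard saturation (``swallowing'') construction: for each index $i$ and each $U\in\mathcal{U}_{i}$ define
\[
\widetilde{U}=U\cup\bigcup\{V\in\mathcal{V}_{i}:d(V,U)\leq r\},\qquad \widetilde{\mathcal{U}}_{i}=\{\widetilde{U}:U\in\mathcal{U}_{i}\},
\]
and then $\mathcal{W}_{i}=\widetilde{\mathcal{U}}_{i}\cup\{V\in\mathcal{V}_{i}:V\cap\bigcup\widetilde{\mathcal{U}}_{i}=\emptyset\}$. Routine triangle-inequality checks, using the gap $r'-2(r+D)>r$ together with the $r$-disjointness of $\mathcal{V}_{i}$, show that each $\mathcal{W}_{i}$ is $r$-disjoint and uniformly bounded; moreover every $V\in\mathcal{V}_{i}$ is either absorbed into some $\widetilde{U}$ or retained in $\mathcal{W}_{i}$, so $(X_{1}\cup X_{2})\setminus\bigcup_{i}\bigcup\mathcal{W}_{i}\subseteq A\cup B$.

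Applying the strong inductive hypothesis to $A$ and $B$ (both of coasdim strictly less than $\lambda(\gamma)\leq\gamma$) yields $\text{coasdim}(A\cup B)<\lambda(\gamma)$, and monotonicity then forces $\text{coasdim}((X_{1}\cup X_{2})\setminus\bigcup_{i}\bigcup\mathcal{W}_{i})<\lambda(\gamma)$, which verifies coasdim$(X_{1}\cup X_{2})\leq\gamma$. I expect the only genuine obstacle to be the constant-tracking inside the saturation step: $r'$ must be chosen uniformly in $r$ and $D$ large enough that the absorbed families $\widetilde{\mathcal{U}}_{i}$ remain $r$-disjoint, and one must check simultaneously that the ``new'' members $\widetilde{U}$ and the ``old'' retained members $V$ cannot come within $r$ of each other. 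Beyond this bookkeeping, no new idea over the classical asdim proof is required; the transfinite structure enters only through replacing ``empty residual'' with ``residual of small coasdim''.
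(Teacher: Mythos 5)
The paper does not prove this lemma itself but cites it as Theorem 3.3 of \cite{yanzhu2018}, and your argument --- transfinite induction on $\gamma$, subspace monotonicity, and the saturated-union (``swallowing'') construction applied levelwise to the $n(\gamma)+1$ families, pushing the limit-ordinal case into the residual sets $A\cup B$ --- is exactly the standard proof of that union theorem and is correct. The only point you leave implicit is that $\mathrm{coasdim}(X_i)\leq\gamma$ in the ordinal sense yields the defining decomposition with precisely $n(\gamma)+1$ families when $\mathrm{coasdim}(X_i)$ is strictly smaller than $\gamma$ (monotonicity of the defining condition in $\gamma$), a minor fact that also follows by a short induction.
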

\begin{defi}
Let $X$ and $Y$ be metric spaces. $f : X \rightarrow Y$ is called \emph{coarse embedding} if
there are non decreasing, unbounded functions $p_{1} ,p_{2} : \RR^{+}\rightarrow \RR^{+}$ such that for
every pair $x_{1},x_{2}\in X$,
\[
p_{1} (d(x_{1},x_{2}))\leq d(f(x_{1}),f(x_{2})) \leq p_{2}(d(x_{1},x_{2})).
\]
If additionally there exists $R>0$ such that $Y \subseteq N_{R}(f[X])$, then $f$ is called \emph{coarse equivalence} and metric spaces $X$ and $Y$
are said to be coarse equivalent. Coarse equivalence is an equivalence relation.

\end{defi}
\begin{lem}\rm(see~\cite{satkiewicz},\cite{yanzhu2018})
\label{lem:coarseequivalent}
Let $X$ and $Y$ be metric spaces. If $X$ and $Y$ are coarse equivalent, then
\[
 \text{trasdim}(X) =\text{trasdim}(Y) \text{ and coasdim}(X)=\text{ coasdim}(Y).
 \]
\end{lem}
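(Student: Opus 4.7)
My plan is to transfer cover data between $X$ and $Y$ along the coarse equivalence $f:X\to Y$. Coarse equivalence is symmetric: for each $y\in Y$ choose $g(y)\in X$ with $d(f(g(y)),y)\leq R$; then $g:Y\to X$ is again a coarse equivalence and $d(g(f(x)),x)$ is uniformly bounded in $x$. It therefore suffices to prove one direction of each equality, say $\text{trasdim}(X)\leq\text{trasdim}(Y)$ and $\text{coasdim}(X)\leq\text{coasdim}(Y)$.

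The central technical ingredient is a pull-back correspondence. Since $p_1,p_2$ are non-decreasing and unbounded, one can choose a strictly increasing function $\phi:\mathbb{N}\to\mathbb{N}$ (defined via $\inf/\sup$ of the non-injective $p_1,p_2$) with the following property: for every uniformly bounded $\phi(i)$-disjoint family $\mathcal{V}$ of subsets of $Y$, the family $f^{-1}(\mathcal{V})=\{f^{-1}(V):V\in\mathcal{V}\}$ is uniformly bounded and $i$-disjoint in $X$, and if $\bigcup\mathcal{V}=Y$ then $\bigcup f^{-1}(\mathcal{V})=X$. All three properties follow directly from $p_1(d(x_1,x_2))\leq d(f(x_1),f(x_2))\leq p_2(d(x_1,x_2))$: uniform boundedness from the lower bound, $i$-disjointness from the upper bound, covering from $f(X)\subseteq Y$.

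From this correspondence I would construct an injection $\Phi:A(X)\to A(Y)$ by $\Phi(\sigma)=\{\phi(i):i\in\sigma\}$. Indeed, if $\sigma\in A(X)$ yet $\Phi(\sigma)\notin A(Y)$, then a cover of $Y$ by $\phi(i)$-disjoint uniformly bounded families ($i\in\sigma$) would pull back along $f$ to a cover of $X$ by $i$-disjoint uniformly bounded families, contradicting $\sigma\in A(X)$. Because $\phi$ is strictly increasing, $\Phi$ further satisfies $\Phi(A(X)^a)\subseteq A(Y)^{\phi(a)}$ for every $a\in\mathbb{N}$. A routine transfinite induction on $\alpha$ (using $\text{Ord}\,M\leq\alpha\iff\text{Ord}\,M^a<\alpha$ for all $a$) then gives $\text{Ord}(A(X)^a)\leq\text{Ord}(A(Y)^{\phi(a)})$ for all $a$, hence $\text{Ord}\,A(X)\leq\text{Ord}\,A(Y)$ and $\text{trasdim}(X)\leq\text{trasdim}(Y)$.

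For the coasdim equality I would argue by transfinite induction on $\gamma$ that $\text{coasdim}(Y)\leq\gamma$ implies $\text{coasdim}(X)\leq\gamma$; the base $\gamma=-1$ (i.e.\ $Y=\varnothing$ hence $X=\varnothing$) is trivial. For the step, given $r>0$, set $s=\phi(r)$ and invoke the hypothesis to obtain $s$-disjoint uniformly bounded families $\mathcal{V}_0,\dots,\mathcal{V}_{n(\gamma)}$ in $Y$ with $\text{coasdim}\bigl(Y\setminus\bigcup(\bigcup_i\mathcal{V}_i)\bigr)<\lambda(\gamma)$. Pulling back yields $r$-disjoint uniformly bounded families $\mathcal{U}_i=f^{-1}(\mathcal{V}_i)$ in $X$ whose complement is precisely $f^{-1}\bigl(Y\setminus\bigcup(\bigcup_i\mathcal{V}_i)\bigr)$. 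The restriction of $f$ to this preimage is a coarse embedding into the $Y$-complement, and the same pull-back argument applied to the inductive hypothesis at the smaller ordinal $\lambda(\gamma)$ yields $\text{coasdim}(X\setminus\bigcup(\bigcup_i\mathcal{U}_i))<\lambda(\gamma)$, completing the induction. The main obstacle will be verifying that a coarse embedding (rather than a full coarse equivalence) on the complement suffices for the inductive step, which I expect to work because only the pull-back half of the correspondence is used to bound coasdim from above. A secondary nuisance is the careful definition of $\phi$ from the non-strictly-monotone bounds $p_1,p_2$.
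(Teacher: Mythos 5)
The paper does not prove this lemma; it is quoted from \cite{satkiewicz} and \cite{yanzhu2018}, so there is no internal proof to compare against. Your argument is correct and is essentially the standard one from those references: choose $\phi$ with $\phi(i)\geq p_2(i)$ so that $\phi(i)$-disjoint uniformly bounded families in $Y$ pull back to $i$-disjoint uniformly bounded families in $X$ (boundedness from the unboundedness of $p_1$, disjointness from monotonicity of $p_2$, covering from $f(X)\subseteq Y$), deduce $\Phi(A(X)^{a})\subseteq A(Y)^{\phi(a)}$, and run the transfinite induction on $\mathrm{Ord}$; symmetry of coarse equivalence gives the reverse inequality. The one point you flag as an ``obstacle'' is indeed the right thing to worry about, and it resolves exactly as you expect: the statement you should prove by transfinite induction on $\gamma$ is ``for every \emph{coarse embedding} $f:X'\to Y'$, $\mathrm{coasdim}(Y')\leq\gamma$ implies $\mathrm{coasdim}(X')\leq\gamma$,'' since the restriction of $f$ to $f^{-1}\bigl(Y\setminus\bigcup(\bigcup_i\mathcal{V}_i)\bigr)$ is a coarse embedding with the same control functions but its image need not be coarsely dense in the complement; only the pull-back half of the correspondence is used, so the induction closes. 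With that strengthened induction hypothesis stated explicitly, and $\phi$ defined concretely (e.g.\ $\phi(i)=\lceil p_2(i)\rceil+i$), the proof is complete.
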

\end{section}
\begin{section}{A metric space with transfinite asymptotic dimension $\omega +k$}\

In this section, we will construct a metric space $X_{\omega+k}$ and prove trasdim$(X_{\omega+k})=\omega+k$ for any $k\in\mathbb{N}$ inspired by the technique in \cite{omega+1}. However, the technique used here is not as simple as in \cite{omega+1}, so we need to introduce the concepts below.\

\begin{defi}(\cite{Engelking})
Let $X$ be a metric space and let $A,B$ be a pair of disjoint subsets of $X$. We say that a subset $L\subset X$ is a \emph{partition}
of $X$ between $A$ and $B$, if there exist open sets $U,W\subset X$ satisfying the following conditions
$$A\subset U, B\subset W\text{ and }X=U\sqcup L\sqcup W.$$
\end{defi}

\begin{defi}
Let $X$ be a metric space and let $A,B$ be a pair of disjoint subsets of $X$. For any $\epsilon>0$, we say that a subset $L\subset X$ is an \emph{$\epsilon$-partition}
of $X$ between $A$ and $B$, if there exist open sets $U,W\subset X$ satisfying the following conditions
$$A\subset U, B\subset W, X=U\sqcup L\sqcup W, d(L,A)>\epsilon\text{ and }d(L,B)>\epsilon$$
Clearly, an $\epsilon$-partition $L$ of $X$ between $A$ and $B$ is a partition of $X$ between $A$ and $B$.
\end{defi}

\begin{lem}
\label{partition2}
Let $L_0\doteq [0,B]^n$ for some $B>0$ and let $F_i^{+}$, $F_i^{-}$ be the pairs of opposite faces of $L_0$ for $i=1,2,\cdots,n$ and let $0<\epsilon<\frac{1}{6}B$.
For $k=1,2,\cdots, n$, let
$\mathcal{U}_k$ be an $\epsilon$-disjoint and $\frac{1}{3}B$-bounded family of subsets of $L_{k-1}$. Then there exists an $\epsilon$-partition $L_k$ of $L_{k-1}$ between $F_k^+\cap L_{k-1}$ and $F_k^-\cap L_{k-1}$ such that $L_k\subset L_{k-1}\cap(\bigcup \mathcal{U}_k)^c$.
\end{lem}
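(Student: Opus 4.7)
My plan is to realize $L_k$ as the closed complement in $L_{k-1}$ of two disjoint open ``thickenings'' $V_{-}$ and $V_{+}$ of the two opposite face-slices, each enlarged to engulf the elements of $\mathcal{U}_k$ that naturally sit on its side. The hypothesis $\diam(\mathcal{U}_k)\leq B/3$ is the key ingredient: since no $U\in\mathcal{U}_k$ can stretch from one face of $L_0$ to the other in the $k$-th coordinate, every such $U$ can be assigned unambiguously to one side.

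Concretely, for each $U\in\mathcal{U}_k$ I set $b_U=\sup\{x_k:x\in U\}$ and declare $U\in\mathcal{U}_k^{-}$ when $b_U\leq 2B/3$, and $U\in\mathcal{U}_k^{+}$ otherwise. The diameter bound yields $d(U,F_k^{+})\geq B/3$ for every $U\in\mathcal{U}_k^{-}$ (since $b_U\leq 2B/3$) and $d(U,F_k^{-})\geq B/3$ for every $U\in\mathcal{U}_k^{+}$ (since then $\inf\{x_k:x\in U\}>B/3$). Moreover $L_{k-1}$ is closed in the cube $[0,B]^n$ by an easy induction on $k$, hence compact and totally bounded, so the $\epsilon$-disjoint family $\mathcal{U}_k$ is finite; consequently $d\big(\bigcup\mathcal{U}_k^{-},\bigcup\mathcal{U}_k^{+}\big)>\epsilon$ as a finite minimum of pairwise distances each strictly greater than $\epsilon$.

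I then fix $\eta\in\big(0,\min(\epsilon/2,\,B/3-2\epsilon)\big)$, a non-empty interval because $\epsilon<B/6$, and set
\[
V_\pm=N_{2\epsilon}(F_k^\pm\cap L_{k-1})\;\cup\;N_\eta\!\Big(\textstyle\bigcup\mathcal{U}_k^\pm\Big).
\]
Both are open in $L_{k-1}$, and they are disjoint via three checks: the face-neighbourhoods are disjoint since $2\epsilon<B-2\epsilon$; each $N_\eta(\bigcup\mathcal{U}_k^\mp)$ avoids $N_{2\epsilon}(F_k^\pm\cap L_{k-1})$ because $B/3-\eta>2\epsilon$; and the two $\mathcal{U}$-neighbourhoods avoid each other because $2\eta<\epsilon<d(\bigcup\mathcal{U}_k^{-},\bigcup\mathcal{U}_k^{+})$. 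Setting $L_k=L_{k-1}\setminus(V_-\cup V_+)$ then produces a closed subset with $L_{k-1}=V_-\sqcup L_k\sqcup V_+$, so $L_k$ is a partition between $F_k^+\cap L_{k-1}$ and $F_k^-\cap L_{k-1}$; every point of $L_k$ lies at distance $\geq 2\epsilon>\epsilon$ from each face, and $\bigcup\mathcal{U}_k\subset V_-\cup V_+$ forces $L_k\cap\bigcup\mathcal{U}_k=\emptyset$.

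The main obstacle is the simultaneous bookkeeping of the three disjointness conditions used to separate $V_-$ from $V_+$; each check isolates exactly one of the three hypotheses ($\epsilon<B/6$, $\diam(\mathcal{U}_k)\leq B/3$, and the $\epsilon$-disjointness of $\mathcal{U}_k$), and the construction would break if any of them were weakened.
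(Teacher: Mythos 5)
Your proof is correct and takes essentially the same route as the paper's: both split $\mathcal{U}_k$ into two subfamilies, one assigned to each face in the $k$-th coordinate direction (the paper by the criterion $d(U,F_k^+)\leq 2\epsilon$, you by $\sup\{x_k : x\in U\}\leq 2B/3$), thicken each subfamily slightly, union it with a collar of its face, verify the two resulting open sets are disjoint using the diameter bound, the $\epsilon$-disjointness, and $\epsilon<\frac{1}{6}B$, and define $L_k$ as the complement. The only differences are the assignment rule and the neighborhood radii; your explicit check that the two $\mathcal{U}$-neighborhoods are disjoint is a detail the paper leaves implicit.
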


\begin{proof}
For any fixed $k\in \{1,\ldots,n\}$, let $\mathcal{A}_k\doteq \{U\in\mathcal{U}_k~|~d(U,F_k^+)\leq2\epsilon\}$ and  $\mathcal{B}_k\doteq \{U\in\mathcal{U}_k~|~ d(U,F_k^+)>2\epsilon\}$.
Note that $\mathcal{A}_k\cup\mathcal{B}_k=\mathcal{U}_k$.
Let
\[A_k=\bigcup\{N_{\frac{\epsilon}{3}}(U):U\in\mathcal{A}_k\}\text{ and }B_k=\bigcup\{N_{\frac{\epsilon}{3}}(U):U\in\mathcal{B}_k\}.\]
Then $d(A_k,F_k^-)>B-\frac{1}{3}B-2\epsilon-\frac{\epsilon}{3}>\frac{4}{3}\epsilon$ and $d(B_k,F_k^+)>2\epsilon-\frac{\epsilon}{3}>\frac{4}{3}\epsilon$. It follows that
\[(A_k\cup N_{\frac{4}{3}\epsilon}(F_k^+))\cap (B_k\cup N_{\frac{4}{3}\epsilon}(F_k^-))=\emptyset.\]
Let $L_k\doteq L_{k-1}\setminus ((A_k\cup N_{\frac{4}{3}\epsilon}(F_k^+))\cup (B_k\cup N_{\frac{4}{3}\epsilon}(F_k^-)))$, then $L_k$ is an $\epsilon$-partition of $L_{k-1}$ between $F_k^+\cap L_{k-1}$ and $F_k^-\cap L_{k-1}$ such that $L_k\subset L_{k-1}\cap(\bigcup \mathcal{U}_k)^c$.
\end{proof}

To prove the main result, we will use a version of Lebesgue theorem.
\begin{lem}\rm(see \cite{Engelking}, Lemma 1.8.19)
\label{partition}
Let $F_i^{+}$, $F_i^{-}$, where $i\in\{1,\ldots,n\}$, be the pairs of opposite faces of $I^n\doteq [0,1]^n$. If $I^n=L_0'\supset L_1'\supset \ldots\supset L_n'$ is a decreasing sequence of closed sets such that $L_i'$ is a partition of $L_{i-1}'$ between $L_{i-1}'\cap F_i^{+}$ and $L_{i-1}'\cap F_i^{-}$ for $i\in\{1,2,\ldots,n\}$, then $L_{n}'\neq \emptyset$.
\end{lem}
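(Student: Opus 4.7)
The plan is to reduce the nested-partition statement to the classical Lebesgue theorem on partitions of the cube, which asserts that whenever $M_1,\dots,M_n$ are closed subsets of $I^n$ with each $M_i$ a partition of $I^n$ between the opposite faces $F_i^{+}$ and $F_i^{-}$, the intersection $\bigcap_{i=1}^{n} M_i$ is nonempty. Granted that, it suffices to extend each $L_i'$ (which is only a partition of $L_{i-1}'$) to a partition $M_i$ of the entire cube $I^n$ between $F_i^{+}$ and $F_i^{-}$, doing so in a way that remembers the original, namely $M_i \cap L_{i-1}' = L_i'$.

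For the extension I would begin by writing $L_{i-1}' = U_i \sqcup L_i' \sqcup W_i$ with disjoint $U_i, W_i$ relatively open in $L_{i-1}'$ and containing $L_{i-1}' \cap F_i^{+}$ and $L_{i-1}' \cap F_i^{-}$ respectively. Exploiting normality of $I^n$ and the fact that $L_{i-1}'$ is closed, I would produce disjoint open sets $\widetilde{U}_i, \widetilde{W}_i \subset I^n$ containing the full faces $F_i^{+}, F_i^{-}$ and satisfying $\widetilde{U}_i \cap L_{i-1}' = U_i$ and $\widetilde{W}_i \cap L_{i-1}' = W_i$; the set $M_i := I^n \setminus (\widetilde{U}_i \cup \widetilde{W}_i)$ is then the required extension. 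The cleanest way to carry this out is to convert $U_i, W_i$ into a continuous function $f_i \colon L_{i-1}' \to [-1,1]$ with $f_i \equiv -1$ on $L_{i-1}' \cap F_i^{+}$, $f_i \equiv 1$ on $L_{i-1}' \cap F_i^{-}$, and $L_i' \subset f_i^{-1}(0)$; use Urysohn to choose $g_i \colon I^n \to [-1,1]$ separating $F_i^{+}$ from $F_i^{-}$; extend $f_i - g_i|_{L_{i-1}'}$ from $L_{i-1}'$ to $I^n$ by the Tietze extension theorem; and let $M_i$ be the zero-set of the recombined function.

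A short induction with base case $M_1 = L_1'$ (since $L_0' = I^n$) and inductive step $\bigcap_{j \leq i} M_j = M_i \cap L_{i-1}' = L_i'$ then yields $\bigcap_{i=1}^n M_i = L_n'$, and the classical Lebesgue theorem gives $L_n' \neq \emptyset$. The main obstacle is the extension step: the open sets $\widetilde{U}_i, \widetilde{W}_i$ must simultaneously be disjoint in $I^n$, restrict to exactly $U_i, W_i$ on $L_{i-1}'$, and contain the full opposite faces $F_i^{\pm}$. This is a standard normal-space gluing, but making all the containment relations compatible requires careful bookkeeping, which is precisely what the Urysohn–Tietze recipe above is designed to handle.
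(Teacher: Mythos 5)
The paper gives no proof of this lemma at all: it is quoted verbatim from Engelking (Lemma 1.8.19), so there is nothing internal to compare against. Your reduction --- extend each relative partition $L_i'$ of $L_{i-1}'$ to a partition $M_i$ of the whole cube between the full faces $F_i^{+},F_i^{-}$, then invoke the classical theorem that $n$ such partitions of $I^n$ have a common point --- is exactly the standard (indeed Engelking's) route, and the skeleton of the induction is right. But two concrete steps in your execution do not work as written. First, you only demand $L_i'\subset f_i^{-1}(0)$, which is the wrong inclusion for your purpose: the argument concludes via $\emptyset\neq\bigcap_{j=1}^{n}M_j\subseteq L_n'$, so what you need at each stage is $M_i\cap L_{i-1}'\subseteq L_i'$, i.e. $f_i^{-1}(0)\subseteq L_i'$. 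If $f_i$ is permitted to vanish inside $U_i\cup W_i$, the common point of the $M_j$ produced at the end need not lie in $L_n'$. (This is repairable in a metric space, e.g. $f_i(x)=d(x,L_i'\cup U_i)-d(x,L_i'\cup W_i)$ vanishes exactly on $L_i'$; note also that the equality $M_i\cap L_{i-1}'=L_i'$ you aim for is more than the argument requires.) Second, and more seriously, the recombined function $\tilde h_i+g_i$ is controlled only on $L_{i-1}'$: on $F_i^{+}\setminus L_{i-1}'$ the Tietze extension $\tilde h_i$ is arbitrary, so $\tilde h_i+g_i=\tilde h_i-1$ may vanish or be positive there, and then the zero set $M_i$ is \emph{not} a partition of $I^n$ between the full faces --- which is precisely the hypothesis of the classical theorem you want to apply.

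Both defects disappear if you replace the Urysohn--Tietze recipe by the standard set-theoretic extension of partitions. Writing $L_{i-1}'=U_i\sqcup L_i'\sqcup W_i$ as you do, the sets $F_i^{+}\cup U_i$ and $F_i^{-}\cup W_i$ are separated in $I^n$ (neither meets the closure of the other: $\overline{U_i}\subseteq U_i\cup L_i'$ misses $W_i$ and $F_i^{-}$, since $F_i^{\pm}\cap L_{i-1}'$ lie in $U_i$, $W_i$ respectively), so in the metric space $I^n$ they admit disjoint open neighbourhoods $\widetilde U_i,\widetilde W_i$. Then $M_i:=I^n\setminus(\widetilde U_i\cup\widetilde W_i)$ is a partition of $I^n$ between $F_i^{+}$ and $F_i^{-}$ with $M_i\cap L_{i-1}'\subseteq L_{i-1}'\setminus(U_i\cup W_i)=L_i'$. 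The induction $\bigcap_{j=1}^{i}M_j\subseteq L_i'$ then closes, and the classical Lebesgue/Eilenberg--Otto theorem (which it is legitimate to take as known here, just as the paper takes the present lemma as known) yields $L_n'\neq\emptyset$. With that substitution your proof is complete.
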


Similar with \cite{omega+1}, we will use asymptotic union to construct examples.\

\begin{defi}
Let $\{Z_{i}\}_{i=1}^{\infty}$ be a sequence of subspaces of a metric space $(Z,d_{Z})$.
Let \[
X=\bigsqcup_{i=1}^{\infty}(0,\cdots,0,Z_{i},0,\cdots).
\]
For every $x,y\in X$, there exist unique $l,k\in\NN$, $x_{l}\in Z_{l}$ and $y_{k}\in Z_{k}$ such that
$x=(0,\cdots,0,x_{l},0,\cdots)$ and $y=(0,\cdots,0,y_{k},0,\cdots)$.
Assume that $l\leq k$, put $c=0$ if $l=k$ and $c=l+(l+1)+...+(k-1)$ if $l<k$. Define a metric on $X$ by
\[d(x,y)=d_{Z}(x_{l},y_{k})+c.\]
We say that $(X,d)$ is \emph{asymptotic union }of $\{Z_{i}\}_{i=1}^{\infty}$, which is denoted by $\text{as}\bigsqcup_{i=1}^{\infty}Z_{i}$. And we denote $as\bigsqcup_{i=n}^{\infty}Z_i$ as a subspace of $as\bigsqcup_{i=1}^{\infty}Z_i$.
\end{defi}
\

Now we begin to construct a metric space with its transfinite asymptotic dimension $\omega+k$ for some $k\in\mathbb{N}$.
For any $k,i\in\mathbb{N}$, let
$$X_{\omega+k}^{(i)}=\{(x_1,\ldots,x_i)\in \mathbb{R}^i||\{j|x_j\notin  2^i\mathbb{Z}\}|\leq k\}.$$
Note that $X_{\omega+k}^{(i)}\subset \mathbb{R}^i$ for each $i\in \mathbb{N}$.
Let \[X_{\omega+k}=\text{as}\bigsqcup_{i=1}^{\infty}X_{\omega+k}^{(i)}, \]where $X_{\omega+k}^{(i)}$ is a subspace of the metric space $(\bigoplus\mathbb{R},d_{\text{max}})$ for each $i\in \mathbb{N}$. Note that $X_{\omega+l}\subseteq X_{\omega+k}$ with $l\leq k$.

\begin{prop}
\label{prop1}
For any $k\in\mathbb{N}$, trasdim$(X_{\omega+k})\leq\omega+k-1$ is not true.
\end{prop}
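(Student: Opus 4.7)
The plan is to proceed by contradiction. Assume $\text{trasdim}(X_{\omega+k})\le\omega+k-1$. Invoking Lemma 2.1 with $l=k-1$ and taking the parameter $k$ in that lemma to be $1$, there exists $m=m(1)\in\mathbb{N}$ such that for every $n\in\mathbb{N}$ there are uniformly bounded families $\mathcal{U}_{-(k-1)},\mathcal{U}_{-(k-2)},\dots,\mathcal{U}_0,\mathcal{U}_1,\dots,\mathcal{U}_m$ covering $X_{\omega+k}$, with $\mathcal{U}_i$ being $1$-disjoint for $i\le 0$ and $\mathcal{U}_j$ being $n$-disjoint for $j\ge 1$. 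Let $D>0$ be a uniform diameter bound.

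I would then take $i$ large, ensuring in particular that $2^i>3D+1$. With this choice the cube $K=[0,3D+1]^i\subset\mathbb{R}^i$ lies in a single cell of the $2^i$-lattice, so $K\cap X_{\omega+k}^{(i)}$ is exactly the union of the coordinate $k$-faces through the origin, namely $\bigcup_{|S|=k}F_S$ where $F_S=[0,3D+1]^S\times\{0\}^{\{1,\dots,i\}\setminus S}$; each $F_S$ is an isometric copy of the $k$-cube $[0,3D+1]^k$. Now I would apply the partition machinery of Lemma 3.1 with $\epsilon=\tfrac12<\tfrac{3D+1}{6}$ to a suitably chosen face $F_{S^*}$, using the $k$ families $\mathcal{U}_{-(k-1)},\dots,\mathcal{U}_0$ (which are $\epsilon$-disjoint since they are $1$-disjoint, and $\tfrac{1}{3}(3D+1)$-bounded since $D\le D+\tfrac13$) as the $k$ partition families. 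This produces a decreasing sequence $F_{S^*}=L_0\supset L_1\supset\dots\supset L_k$ with $L_k\cap\bigcup_{i\le0}\mathcal{U}_i=\emptyset$, and Lemma 3.2 applied to the $k$-cube $F_{S^*}$ gives $L_k\neq\emptyset$, so that any point $x\in L_k$ is in $F_{S^*}\subset X_{\omega+k}^{(i)}$ and is covered by no $\mathcal{U}_i$ with $i\le0$.

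The main obstacle, and the reason the argument is genuinely more delicate than the $k=1$ case in \cite{omega+1}, is to arrange that the point $x$ also evades the large-disjointness families $\mathcal{U}_1,\dots,\mathcal{U}_m$. My plan is to take $n$ large relative to $m$, $k$, and the parameters, and to absorb the few sets of $\mathcal{U}_1,\dots,\mathcal{U}_m$ that meet $F_{S^*}$ into the partition construction itself: at each of the $k$ partition steps one excises an additional $\tfrac{\epsilon}{3}$-neighborhood of the obstacle sets from $L_j$. Because each obstacle set has diameter at most $D$, so its $\tfrac{\epsilon}{3}$-neighborhood has diameter at most $D+\tfrac{2\epsilon}{3}$, which is strictly less than the side $3D+1$ of $F_{S^*}$, no obstacle neighborhood can bridge the two open sides of the partition, and the partition property used in Lemma 3.2 is preserved. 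A pigeonhole/averaging step over the $\binom{i}{k}$ coordinate $k$-faces (taking $i$ sufficiently large in terms of $m$ and $k$) is used to locate a face $F_{S^*}$ on which the number of obstacle sets is small enough for the absorption to succeed.

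The hard part is thus the combined face-selection and absorption step, which requires simultaneously controlling (i) the number of obstacle sets meeting the chosen face, (ii) the clustering of their $\tfrac{\epsilon}{3}$-neighborhoods so that no cluster bridges the two sides of any partition step, and (iii) the preservation of the partition property after the additional excisions. Once this delicate step is carried out, the point $x\in L_k$ produced by Lemma 3.2 lies in $X_{\omega+k}^{(i)}\subset X_{\omega+k}$ but is covered by no $\mathcal{U}_i$, contradicting the assumed cover and thereby proving $\text{trasdim}(X_{\omega+k})>\omega+k-1$.
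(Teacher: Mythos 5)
There is a genuine gap, and it is structural rather than technical. Your entire argument takes place inside a single $k$-dimensional face $F_{S^*}$ of side $3D+1$, so Lemma \ref{partition} only gives you $k$ partition directions; with these you can defeat the $k$ small-disjointness families, but the $m$ families $\mathcal{U}_1,\dots,\mathcal{U}_m$ must then be disposed of by ``absorption,'' and this cannot work. First, there is a circularity in the quantifiers: the diameter bound $D=B(n)$ is only produced \emph{after} $n$ is chosen and may be arbitrarily large compared to $n$, so you cannot arrange $n>3D+1$; consequently each $\mathcal{U}_j$ with $j\ge 1$ may contribute unboundedly many sets to the chosen face, not ``few.'' Second, even in the ideal case where each $\mathcal{U}_j$ meets the face in a single set, $m$ sets of diameter $D$ can chain from $F^+$ to $F^-$ across a cube of side $3D+1$ (indeed they can cover the face outright once $m\ge 4^k$); a connected chain of excised sets joining the two faces forces every partition between them to meet the chain, so no partition avoiding the obstacles exists and the excision step fails. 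The pigeonhole over the $\binom{i}{k}$ faces cannot rescue this: the restriction of the hypothesized cover to a single face of side $O(D)$ is perfectly realizable (e.g.\ four concentric ``annular'' obstacle sets cover every such face using no small families at all), so no contradiction can be extracted from one face, no matter how it is selected.

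The idea you are missing is the one the paper uses: work in an ambient cube $[0,6B]^{m+k}$ of dimension $m+k$, so that there are $m+k$ partition directions --- $m$ of them dedicated to $\mathcal{U}_1,\dots,\mathcal{U}_m$ and $k$ to $\mathcal{U}_{-k+1},\dots,\mathcal{U}_0$. Since $m=m(n)$ is fixed before the second disjointness parameter is chosen, the paper may take $\mathcal{U}_1,\dots,\mathcal{U}_m$ to be $2^{m+k+2}$-disjoint, which exceeds the lattice scale $2^{m+k}$ defining $X^{(m+k)}_{\omega+k}$. This allows each partition $L_j$ (obtained from Lemma \ref{partition2} applied to $N_{2^{m+k}}(\mathcal{U}_j)$) to be ``snapped'' to the grid: one replaces $L_j$ by the union of the $(m+k-j)$-skeleta of the cells of the $2^{m+k}$-grid meeting it, which is still a partition and still avoids $\bigcup\mathcal{U}_j$. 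After $m$ such steps the partition $L_m'$ lies in the $k$-skeleton, i.e.\ inside $X^{(m+k)}_{\omega+k}$, where it must be covered by $\mathcal{U}_{-k+1}\cup\dots\cup\mathcal{U}_0$ alone; the last $k$ partition steps then empty it out, contradicting Lemma \ref{partition}. Your single-face setup has no analogue of this dimension-reduction mechanism, and without it the statement cannot be reached.
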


\begin{proof}
Suppose that trasdim$(X_{\omega+k})\leq\omega+k-1$. By Lemma \ref{lem:trasdim}, for every $n \in \mathbb{N}$, there exists $m=m(n) \in \mathbb{N}$ such that  there exist $B$-bounded
families $\mathcal{U}_{-k+1},\mathcal{U}_{-k+2},\ldots,\mathcal{U}_{m-1},\mathcal{U}_{m}$ satisfying $\mathcal{U}_i$ is $n$-disjoint for $i =-k+1,\ldots, 0$, $\mathcal{U}_j$ is $2^{m+k+2}$-disjoint for $j = 1, 2,\ldots, m$ and $\bigcup^{m}_{i=-k+1} \mathcal{U}_i$ covers $X_{\omega+k}$ and hence covers $[0,6B]^{m+k}\cap X_{\omega+k}^{(m+k)}$. Without lose of generality, we can assume $B=B(n)>$\text{max}$\{n,2^{m+k+2}\}$.\

We assume that $p=\frac{6B}{2^{m+k}}\in \mathbb{N}$.
Taking a bijection $\psi:\{1,2,\cdots,p^{m+k}\} \to \{0,1,2,\cdots,p-1\}^{m+k}$, let
$$Q(t)=\prod_{j=1}^{m+k}[2^{m+k}\psi(t)_{j},2^{m+k}(\psi(t)_{j}+1)],\text{in which }\psi(t)_{j}\text{ is the $j$th coordinate of }\psi(t).$$
Let $\mathcal{Q}=\{Q(t)~|~t\in\{1,2,\cdots,p^{m+k}\}\}$, then $[0,6B]^{m+k}=\bigcup_{Q\in\mathcal{Q}}Q$. Note that $[0,6B]^{m+k}\cap X_{\omega+k}^{(m+k)}=\bigcup_{Q\in \mathcal{Q}}\partial_{k}Q$, where $\partial_{k}Q$ is the $k$-dimensional skeleton of $Q$.

Let $L_0=[0,6B]^{m+k}$. By Lemma \ref{partition2},
since $N_{2^{m+k}}(\mathcal{U}_1)$ is $2^{m+k}$-disjoint and $(2^{m+k+1}+B)$-bounded, there exists a $2^{m+k}$-partition $L_1$ of $[0,6B]^{m+k}$ such that $L_1\subset (\bigcup N_{2^{m+k}}(\mathcal{U}_1))^c\cap [0,6B]^{m+k}$ and $d(L_1,F_1^{+/-})>2^{m+k}$.\
Since $L_1$ is a $2^{m+k}$-partition of $[0,6B]^{m+k}$ between $F_1^+$ and $F_1^-$, then $[0,6B]^{m+k}=L_1\sqcup A_1 \sqcup B_1$ such that $A_1$, $B_1$ are open in $[0,6B]^{m+k}$ and $A_1$, $B_1$ contain two opposite facets $F_1^-$, $F_1^+$ respectively.\

Let $\mathcal{M}_1=\{Q\in \mathcal{Q}|Q\cap L_1\neq \emptyset\}$ and $M_1=\bigcup \mathcal{M}_1$. Since $L_1$ is a $2^{m+k}$-partition of $[0,6B]^{m+k}$ between $F_1^+$ and $F_1^-$, then  $M_1$ is a partition of $[0,6B]^{m+k}$ between $F_1^+$ and $F_1^-$, i.e., $[0,6B]^{m+k}=M_1\sqcup A'_1 \sqcup B'_1$ such that $A'_1$, $B'_1$ are open in $[0,6B]^{m+k}$ and $A'_1$, $B'_1$ contain two opposite facets $F_1^-$, $F_1^+$ respectively.
Let $L'_1=\partial_{m+k-1}M_1=\bigcup\{\partial_{m+k-1}Q|Q\in \mathcal{M}_1\}$, then $[0,6B]^{m+k}\setminus (L'_1\sqcup A'_1 \sqcup B'_1)$ is the union of some disjoint open $(m+k)$-dimensional cubes with length of edge $= 2^{m+k}$.
So $L'_1$ is a partition of $[0,6B]^{m+k}$  between $F_1^+$ and $F_1^-$ and $L'_1\subset (\bigcup \mathcal{U}_1)^c\cap [0,6B]^{m+k}$.

For $N_{2^{m+k}}(\mathcal{U}_2)$, there exists a $2^{m+k}$-partition $L_2$ of $L'_1$ such that $L_2\subset (\bigcup  N_{2^{m+k}}(\mathcal{U}_2))^c\cap [0,6B]^{m+k}$ and $d(L_2,F_2^{+/-})>2^{m+k}$.\
Since $L_2$ is a $2^{m+k}$-partition of $L'_1$ between $L'_1\cap F_2^+$ and $L'_1\cap F_2^-$, then $L'_1=L_2\sqcup A_2 \sqcup B_2$ such that $A_2$, $B_2$ are open in $L'_1$ and $A_2$, $B_2$ contain two opposite facets $L'_1\cap F_2^-$, $L'_1\cap F_2^+$ respectively and $d(L_2,F_2^{+/-})>2^{m+k}$.

Let $\mathcal{M}_2=\{Q\in \mathcal{M}_1~|~Q\cap L_2\neq \emptyset\}$ and $M_2=\bigcup \mathcal{M}_2$. Since $L_2$ is a $2^{m+k}$-partition of $L'_1$ between $L'_1\cap F_2^+$ and $L'_1\cap F_2^-$, then $M_2 \cap L_1'$ is  a partition of $L'_1$  between $L'_1\cap F_2^+$ and $L'_1\cap F_2^-$, i.e., $L'_1=(M_2\cap L_1')\sqcup A'_2 \sqcup B'_2$ such that $A'_2$, $B'_2$ are open in $L'_1$ and $A'_2$, $B'_2$ contain two opposite facets $L'_1\cap F_2^-$, $L'_1\cap F_2^+$ respectively.
Let $L'_2=\partial_{m+k-2}M_2\doteq\bigcup\{\partial_{m+k-2}Q|Q\in \mathcal{M}_2\}$, then $L'_1\setminus (L'_2\sqcup A'_2 \sqcup B'_2)$ is the union of some disjoint open $(m+k-1)$-dimensional cubes with length of edge = $2^{m+k}$.
So $L'_2$ is also a partition of $L'_1$ between $L'_1\cap F_2^+$ and $L'_1\cap F_2^-$ and $L'_2\subset (\bigcup (\mathcal{U}_1\cup \mathcal{U}_2))^c\cap [0,6B]^{m+k}$.

After $m$ steps above, we have $L'_m$ to be a partition of $L'_{m-1}$ and $L'_m\subset (\bigcup(\mathcal{U}_1\cup \ldots\cup\mathcal{U}_m))^c\cap [0,6B]^{m+k}$. Note that $L'_m\subset X_{\omega+k}^{(m+k)}$ and hence $L'_m\subset (\bigcup(\mathcal{U}_{-k+1}\cup \ldots\cup\mathcal{U}_0))\cap [0,6B]^{m+k}$.

For $j=1,2,\cdots,k$, there exists a partition $L'_{m+j}$ of $L'_{m+j-1}$ between $L'_{m+j-1}\cap F_{m+j}^+$ and $L'_{m+j-1}\cap F_{m+j}^-$ such that
$L'_{m+j}\subseteq L'_{m+j-1}\cap(\bigcup(\mathcal{U}_{-j+1}\cup \ldots\cup\mathcal{U}_m))^c$ by Lemma \ref{partition2}.
It follows that $L'_{m+k}\subseteq L'_{m+k-1}\cap(\bigcup(\mathcal{U}_{-k+1}\cup \ldots\cup\mathcal{U}_0))^c=\emptyset$,
which is a contradiction with Lemma \ref{partition}. So trasdim$(X_{\omega+k})\leq \omega+k-1$ is not true.
\end{proof}

\begin{lem}\rm\label{lemmaOrd}
For $M\subset Fin\mathbb{N}$, $k\in \mathbb{N}\cup \{0\}$ and an infinite ordinal number $\alpha$,  \text{ Ord}$M\leq \alpha+k$ if and only if \text{ Ord}$M^{\tau}<\alpha$ for every $\tau\in Fin\mathbb{N}$ with $|\tau|=k+1$.
\end{lem}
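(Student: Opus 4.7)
The plan is to argue by induction on $k \in \mathbb{N} \cup \{0\}$, with the base case $k = 0$ being essentially the definition of $\text{Ord}$: the condition $\text{Ord}M \leq \alpha$ is precisely $\forall a \in \mathbb{N},\ \text{Ord}M^a < \alpha$, and any $\tau \in Fin\mathbb{N}$ with $|\tau|=1$ has the form $\tau = \{a\}$, so that $M^\tau = M^a$. Hence both sides of the claimed equivalence say the same thing when $k=0$.

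For the inductive step, fix $k \geq 1$ and assume the equivalence for $k-1$ holds uniformly for every subset of $Fin\mathbb{N}$. Since $\alpha$ is infinite and $k \geq 1$, the ordinal $\alpha + k$ is a successor, so $\text{Ord}M^a < \alpha + k$ is equivalent to $\text{Ord}M^a \leq \alpha + (k-1)$. Combining this with the recursive definition of $\text{Ord}$ gives
\[\text{Ord}M \leq \alpha + k \iff \forall a \in \mathbb{N},\ \text{Ord}M^a \leq \alpha + (k-1).\]
Applying the induction hypothesis to each $M^a$ in place of $M$, the right-hand side translates into the condition: for every $a \in \mathbb{N}$ and every $\tau' \in Fin\mathbb{N}$ with $|\tau'|=k$, $\text{Ord}(M^a)^{\tau'} < \alpha$.

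The core computation is then the identity $(M^a)^{\tau'} = M^{\tau' \cup \{a\}}$ whenever $a \notin \tau'$, which one reads off by unpacking the definition of $M^\sigma$ twice and carefully tracking the disjointness conditions; in the degenerate case $a \in \tau'$ one has $(M^a)^{\tau'} = \varnothing$, whose $\text{Ord}$ is $0 < \alpha$ automatically since $\alpha$ is infinite, and so contributes nothing to the quantifier. Because each $\tau \in Fin\mathbb{N}$ with $|\tau|=k+1$ can be written as $\tau = \tau' \cup \{a\}$ with $a \notin \tau'$ and $|\tau'|=k$, the pair quantifier ``$\forall a,\ \forall \tau'$ with $|\tau'|=k$'' is equivalent to the single quantifier ``$\forall \tau$ with $|\tau|=k+1$'', closing the induction. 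The main subtlety will be the bookkeeping around the identity $(M^a)^{\tau'} = M^{\tau' \cup \{a\}}$ and ensuring the disjointness conditions match, together with a careful check that the ``$\alpha$ infinite'' hypothesis is used exactly where $\alpha + k$ needs to be a successor and where $0 < \alpha$ is invoked; beyond that, the argument is a routine unwinding of definitions.
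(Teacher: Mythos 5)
Your proposal is correct and follows essentially the same route as the paper: induction on $k$, with the base case read off from the definition of $\mathrm{Ord}$, and the inductive step obtained by peeling off one singleton, applying the inductive hypothesis to $M^a$, and using the identity $(M^a)^{\tau'}=M^{\tau'\cup\{a\}}$ for $a\notin\tau'$ (the paper leaves this identity and the degenerate case $a\in\tau'$ implicit, whereas you handle them explicitly, which is a minor improvement in rigor rather than a different argument).
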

\begin{proof}
We will prove it by induction on $k$.
Firstly, for $k=0$, the statement is true by the definition of\text{ Ord}$M$. Now assume that the statement is true for $k=n\in\mathbb{N}\cup \{0\}$.
Then by definition,  \text{ Ord}$M\leq \alpha+n+1$ if and only if for every $a\in\NN$, \text{ Ord}$M^{a}\leq \alpha+n$, i.e.,
\[\text{ Ord}M^{\sigma}\leq \alpha+n~~\text{for every}~ \sigma\in Fin\mathbb{N}\text{ with }|\sigma|=1.
\]
By assumption, it is equivalent to
\[\text{ Ord}(M^{\sigma})^{\varsigma}<\alpha~~\text{for every}~ \varsigma\in Fin\mathbb{N}\text{ with }\varsigma\cap\sigma=\emptyset\text{ and }|\varsigma|=n+1 \text{~and for every}~\sigma\in Fin\mathbb{N}\text{ with }|\sigma|=1.
\]
Therefore,
$$\text{ Ord}M\leq \alpha+n+1 \text{~if and only if~} \text{ Ord}M^{\tau}< \alpha~\text{for every}~ \tau\in Fin\mathbb{N}\text{ with }|\tau|=n+2.
$$
i.e., the result is true for $k=n+1$.
\end{proof}

\begin{lem}\rm(see~\cite{yanzhu2018}, Theorem 3.2)
\label{lem:coasdim}
Let $X$ be a metric space,  coasdim$(X)\leq \omega+k$  implies trasdim$(X)\leq\omega+k$ for every $k\in\mathbb{N}$.
\end{lem}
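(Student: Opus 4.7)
The plan is to verify directly the characterization of trasdim$(X)\leq \omega+k$ given in Lemma~\ref{lem:trasdim}. Fix an arbitrary $k'\in\NN$ (playing the role of the parameter called ``$k$'' in that lemma, which should not be confused with our fixed $k$). Applying the hypothesis coasdim$(X)\leq \omega+k$ at scale $r=k'$, I would extract $k'$-disjoint uniformly bounded families $\mathcal{V}_0,\dots,\mathcal{V}_k$ (note that $n(\omega+k)=k$, giving exactly $k+1$ families) whose complement
\[
Y_{k'}\;:=\;X\setminus \bigcup\Bigl(\bigcup_{i=0}^{k}\mathcal{V}_i\Bigr)
\]
satisfies coasdim$(Y_{k'})<\lambda(\omega+k)=\omega$. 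Since the only ordinals strictly below $\omega$ are the finite ones, there exists $p=p(k')\in \NN\cup\{0\}$ with coasdim$(Y_{k'})\leq p$, and by the remark after the definition of coasdim this is equivalent to asdim$(Y_{k'})\leq p$.

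Now I would set $m(k'):=p+1$. For any $n\in\NN$, invoking asdim$(Y_{k'})\leq p$ yields $n$-disjoint uniformly bounded families $\mathcal{W}_0,\dots,\mathcal{W}_p$ of subsets of $Y_{k'}$ (hence of $X$) that together cover $Y_{k'}$. Relabeling $\mathcal{U}_{-k+i}:=\mathcal{V}_i$ for $i=0,1,\dots,k$ and $\mathcal{U}_{j}:=\mathcal{W}_{j-1}$ for $j=1,\dots,p+1$, I would obtain uniformly bounded families $\mathcal{U}_{-k},\dots,\mathcal{U}_{m(k')}$ such that $\mathcal{U}_i$ is $k'$-disjoint for $i\in\{-k,\dots,0\}$, $\mathcal{U}_j$ is $n$-disjoint for $j\in\{1,\dots,m(k')\}$, and whose union covers $X$ (because the $\mathcal{V}_i$'s cover $X\setminus Y_{k'}$ while the $\mathcal{W}_i$'s cover $Y_{k'}$). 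Lemma~\ref{lem:trasdim} then yields trasdim$(X)\leq \omega+k$.

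The only subtlety to check is that $m(k')$ really depends on $k'$ alone and not on $n$. This is exactly what the strict inequality coasdim$(Y_{k'})<\omega$ buys us: it produces a single finite integer $p$ that bounds the asymptotic dimension of the remainder, and this integer determines how many $n$-disjoint families are needed regardless of how large $n$ is taken. Aside from this observation, the argument is a direct unwinding of the definitions of coasdim and trasdim together with the characterization in Lemma~\ref{lem:trasdim}, so I do not anticipate any genuinely hard step.
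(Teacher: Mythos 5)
Your argument is correct. Note that the paper itself gives no proof of this lemma --- it is quoted from \cite{yanzhu2018} (Theorem 3.2) --- so the only in-paper point of comparison is the proof of its generalization, Proposition \ref{proprelation}, which works directly with the combinatorial machinery $\mathrm{Ord}\,A(X)^{\tau}$ via Lemma \ref{lemmaOrd} and an induction on the ordinal. Your route instead unwinds the hypothesis through the equivalent characterization of $\mathrm{trasdim}(X)\leq\omega+l$ in Lemma \ref{lem:trasdim}, which is cleaner and more elementary once that characterization is granted; the paper's $\mathrm{Ord}$-based argument has the advantage of scaling to arbitrary ordinals $\gamma$, which is exactly why Proposition \ref{proprelation} is phrased that way. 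The one point you rightly flag --- that $m(k')=p+1$ depends only on $k'$ and not on $n$ --- is indeed the crux, and it is correctly supplied by the fact that $\mathrm{coasdim}(Y_{k'})<\omega$ forces $\mathrm{asdim}(Y_{k'})\leq p$ for a single finite $p$ determined by the scale-$k'$ decomposition alone (with the trivial degenerate case $Y_{k'}=\emptyset$ handled by taking empty families). No gaps.
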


This lemma can be generalized as follows.

\begin{prop}\rm
\label{proprelation}
Let $X$ be a metric space, if coasdim$(X)\leq \gamma$ for some ordinal number $\gamma$, then trasdim$(X)\leq\gamma$.

\end{prop}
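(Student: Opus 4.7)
The natural strategy is transfinite induction on $\gamma$, using Lemma \ref{lemmaOrd} to reduce $\text{trasdim}(X)\leq\gamma$ to statements about $\text{Ord}\,A(X)^{\tau}$, and using the $Y:=X\setminus \bigcup(\bigcup_{i=0}^{n(\gamma)}\mathcal{U}_{i})$ provided by the definition of coasdim as the smaller space to feed into the induction hypothesis. The case $\lambda(\gamma)=0$ (so $\gamma$ finite) is immediate from Remark 2.3 and Remark 2.5, since then $\text{coasdim}(X)\leq\gamma$ is equivalent to $\text{asdim}(X)\leq\gamma$, which is equivalent to $\text{trasdim}(X)\leq\gamma$. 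The existing Lemma \ref{lem:coasdim} is the special case $\lambda(\gamma)=\omega$, $n(\gamma)=k$; the task is to port the same bookkeeping into the transfinite setting.

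For the inductive step, write $\gamma=\lambda+n$ with $\lambda=\lambda(\gamma)$ an infinite limit ordinal and $n=n(\gamma)\in\mathbb{N}\cup\{0\}$. By Lemma \ref{lemmaOrd} it suffices to prove $\text{Ord}\,A(X)^{\tau}<\lambda$ for every $\tau\in Fin\mathbb{N}$ with $|\tau|=n+1$. Fix such a $\tau=\{i_{0}<i_{1}<\cdots<i_{n}\}$ and set $r=i_{n}$. Since $\text{coasdim}(X)\leq\lambda+n$, there exist $r$-disjoint uniformly bounded families $\mathcal{V}_{0},\ldots,\mathcal{V}_{n}$ such that $\text{coasdim}(Y)<\lambda$ for $Y=X\setminus\bigcup_{j=0}^{n}\bigcup\mathcal{V}_{j}$. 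By the inductive hypothesis, $\text{trasdim}(Y)<\lambda$, i.e.\ $\text{Ord}\,A(Y)<\lambda$.

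The key inclusion to establish is $A(X)^{\tau}\subseteq A(Y)$, which together with Remark 2.2 gives $\text{Ord}\,A(X)^{\tau}\leq\text{Ord}\,A(Y)<\lambda$ and closes the induction. To prove the inclusion, take $\sigma\in A(X)^{\tau}$; by definition $\sigma\cap\tau=\emptyset$ and $\sigma\cup\tau\in A(X)$. Suppose for contradiction $\sigma\notin A(Y)$, so there are uniformly bounded families $\mathcal{W}_{i}$, $i\in\sigma$, with $\mathcal{W}_{i}$ being $i$-disjoint and $\bigcup_{i\in\sigma}\mathcal{W}_{i}$ covering $Y$. Re-index $\mathcal{V}_{j}$ as the family carrying the label $i_{j}\in\tau$; since $\mathcal{V}_{j}$ is $r$-disjoint and $i_{j}\leq r$, it is $i_{j}$-disjoint. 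The combined family $\{\mathcal{V}_{j}\}_{j=0}^{n}\cup\{\mathcal{W}_{i}\}_{i\in\sigma}$ is then uniformly bounded, each component is disjoint at the required scale, and its members cover $(X\setminus Y)\cup Y=X$, contradicting $\sigma\cup\tau\in A(X)$.

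I do not expect a genuine obstacle: the transfinite induction is driven entirely by Lemma \ref{lemmaOrd} and the inclusion $A(X)^{\tau}\subseteq A(Y)$. The only care needed is the bookkeeping of the decomposition $\gamma=\lambda(\gamma)+n(\gamma)$ and the verification that the choice $r=\max\tau$ lets one re-label a single family of $r$-disjoint families so that the scales prescribed by $\tau$ are respected; this is the mildly delicate point and is exactly the step that generalises the proof of Lemma \ref{lem:coasdim}.
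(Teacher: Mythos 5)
Your proposal is correct and follows essentially the same route as the paper's proof: transfinite induction on $\gamma$, reduction via Lemma \ref{lemmaOrd} to bounding $\text{Ord}\,A(X)^{\tau}$ for $|\tau|=n(\gamma)+1$, the choice $r=\max\tau$, and the inclusion $A(X)^{\tau}\subseteq A(X\setminus\bigcup(\bigcup_{i}\mathcal{U}_{i}))$ established by the same contradiction argument. The only cosmetic difference is that the paper anchors the induction at all $\gamma<2\omega$ by citing Lemma \ref{lem:coasdim}, whereas you start from the finite case and let your inductive step reprove that lemma.
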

\begin{proof}
We will prove it by induction on $\gamma$.
\begin{itemize}
\item By the Lemma \ref{lem:coasdim}, we obtain that coasdim$(X)\leq \gamma$ implies trasdim$(X)\leq\gamma$ for every $\gamma<2\omega$.
\item Assume that the statement is true for $\gamma<\beta$. Now $\gamma=\beta=\alpha+n$ for some limit ordinal $\alpha$ and $n\in\mathbb{N}$.
We will show that trasdim$(X)\leq\alpha+n$, i.e., Ord$A(X)\leq\alpha+n$. \par
By the Lemma \ref{lemmaOrd}, it suffices to show that
\text{ Ord}$A(X)^{\tau}<\alpha$ for every $\tau\in Fin\mathbb{N}$ with $|\tau|=n+1$.
For any $\tau\in Fin\mathbb{N}$ with $|\tau|=n+1$, let $r=\text{max}$ $\{x\in\tau\}$. Since coasdim$(X)\leq \alpha+n$,
there exist $r$-disjoint uniformly bounded families $\mathcal{U}_0,\ldots,\mathcal{U}_{n}$ of subsets of $X$ such that \text{coasdim}$(X\setminus \bigcup(\bigcup_{i=0}^{n}\mathcal{U}_i))<\alpha<\beta$.
By assumption, it implies \text{trasdim}$(X\setminus \bigcup(\bigcup_{i=0}^{n}\mathcal{U}_i))<\alpha$, i.e., Ord$A(X\setminus \bigcup(\bigcup_{i=0}^{n}\mathcal{U}_i)))<\alpha$.

Note that $A(X)^{\tau}\subseteq A(X\setminus \bigcup(\bigcup_{i=0}^{n}\mathcal{U}_i)))$. Indeed, for every $\sigma\in A(X)^{\tau}$, if $\sigma\notin  A(X\setminus \bigcup(\bigcup_{i=0}^{n}\mathcal{U}_i))$, then there are uniformly bounded families $\mathcal{V}_{j}$ for $j\in\sigma$
such that $\mathcal{V}_{j}$ is $j$-disjoint and  $\bigcup_{j\in\sigma}\mathcal{V}_{j}$ covers $X\setminus \bigcup(\bigcup_{i=0}^{n}\mathcal{U}_i)$. It
follows that $(\bigcup_{j\in\sigma}\mathcal{V}_{j})\cup (\bigcup_{i=0}^{n}\mathcal{U}_i)$ covers $X$. Thus $\tau\sqcup\sigma\notin A(X)$,
i.e., $\sigma \notin A(X)^{\tau}$ which is a contradiction. So we obtain that $\sigma\in A(X\setminus \bigcup(\bigcup_{i=0}^{n}\mathcal{U}_i)))$.

Therefore, Ord$A(X)^{\tau}\leq\text{Ord}A(X\setminus \bigcup(\bigcup_{i=0}^{n}\mathcal{U}_i))<\alpha$.
i.e., the result is true for $\gamma=\beta$.
\end{itemize}

\end{proof}

For every $n,i,k\in\mathbb{N}$, let
$$X_{\omega+k}^{(i,n)}=\{(x_1,\ldots,x_i)\in \mathbb{R}^i~|~~|\{j~|~x_j\notin 2^n\mathbb{Z}\}|\leq k\}.$$
Note that $X_{\omega+k}^{(i)}=X_{\omega+k}^{(i,i)}$.\

\begin{lem}
\label{omega+1}
For every $r\in\NN$ with $r\geq4$, there exist $ n=r\in\NN$ and $r$-disjoint uniformly bounded families $\mathcal{U}_0,\mathcal{U}_1$ such that $\mathcal{U}_0\cup \mathcal{U}_1$ covers $as\bigsqcup_{i=n}^{\infty}X_{\omega+1}^{(i,n)}$.
\end{lem}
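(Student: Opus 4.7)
The plan is to cover $\mathrm{as}\bigsqcup_{i \ge n} X_{\omega+1}^{(i,n)}$ by two families of ``stars'' at grid vertices, using a checkerboard colouring modified to handle one exceptional pair of copies. Set $n = r$ and $\alpha = 2^{n-1}$, and for each $v \in (2^n\mathbb{Z})^i$ let $S_v^{(i)} := \bigcup_{j=1}^{i} \{v + t e_j : |t| \le \alpha\}$ denote the radius-$\alpha$ star at $v$ inside $X_{\omega+1}^{(i,n)}$; it has max-metric diameter $2^n$, and $\{S_v^{(i)}\}_v$ covers $X_{\omega+1}^{(i,n)}$. Colour each grid vertex $v$ by $c(v) := \sum_j v_j/2^n \bmod 2$. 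Within a single copy, two distinct same-colour vertices are at max-metric distance $\ge 2^n$ (the extremal configurations being ``$\pm 2^{n+1}$ in one coordinate'' or ``$\pm 2^n$ in two coordinates''), and a brief case-check shows the corresponding stars are at distance $\ge 2^{n-1}$; since $r \ge 4$ forces $2^{r-1} > r$, each colour class is $r$-disjoint within each copy.

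Across distinct copies $X_{\omega+1}^{(l,n)}, X_{\omega+1}^{(k,n)}$ with $n \le l < k$, the asymptotic-union offset $c(l,k) = l + (l+1) + \cdots + (k-1)$ is $\ge n+1 > r$, except for the unique pair $(l,k) = (n,n+1)$, where $c = n = r$ exactly. At this exceptional pair, the ``twin'' stars $S_v^{(n)}$ and $S_{(v,0)}^{(n+1)}$ coincide in $\bigoplus \mathbb{R}$ and hence lie at asymptotic distance exactly $r$, which would obstruct $r$-disjointness if both were assigned to the same $\mathcal{U}_j$. I bypass this by merging each twin pair into a single set: for $v' = (v'_1, \ldots, v'_{n+1}) \in (2^n\mathbb{Z})^{n+1}$, set $T_{v'} := S_{v'}^{(n+1)} \sqcup S_{(v'_1, \ldots, v'_n)}^{(n)}$ when $v'_{n+1} = 0$, and $T_{v'} := S_{v'}^{(n+1)}$ when $v'_{n+1} \ne 0$; for $i \ge n+2$ set $T_v^{(i)} := S_v^{(i)}$. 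The $T$'s cover $\mathrm{as}\bigsqcup_{i \ge n} X_{\omega+1}^{(i,n)}$ (each $X_{\omega+1}^{(n,n)}$-star is absorbed into a merged $T_{(v,0)}$), each has diameter $\le 2^n + n$, and colouring each $T$ by $c$ of its index vertex yields the two families $\mathcal{U}_0, \mathcal{U}_1$.

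The remaining work is to verify $r$-disjointness, for which in-copy and well-separated cross-copy pairs are handled above. The one nontrivial case is two same-colour merged $T_{v'}, T_{u'}$ with $v' \ne u'$: since both have vanishing $(n+1)$-th coordinate, $(v'_1, \ldots, v'_n) \ne (u'_1, \ldots, u'_n)$ are distinct same-colour vertices of $(2^n\mathbb{Z})^n$; the two in-copy component pairs are at distance $\ge 2^{n-1}$ by the checkerboard estimate, and the two cross-copy component pairs have $\bigoplus\mathbb{R}$-distance $\ge 2^{n-1}$ by the same reasoning applied to their projections to $\mathbb{R}^n$, giving asymptotic distance $\ge 2^{n-1} + n > r$. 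The analogous check for a merged $T$ against an unmerged one uses that when $u'_{n+1} \ne 0$, already $|u'_{n+1}| \ge 2^n$ forces $d_Z \ge 2^{n-1}$ via the $(n+1)$-th coordinate alone. The main obstacle throughout is this one exceptional pair of copies $(n, n+1)$ with $c = r$, and the hypothesis $r \ge 4$ is precisely what makes the checkerboard star-separation $2^{r-1}$ strictly exceed $r$.
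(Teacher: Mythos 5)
Your proof is correct, and it differs from the paper's in two respects worth recording. First, the in-copy decomposition is different: the paper covers each $X_{\omega+1}^{(i,n)}$ by one family of \emph{small} open vertex stars $\bigl(\prod_{t}(n_t2^n-r,\,n_t2^n+r)\bigr)\cap X_{\omega+1}^{(i,n)}$ (radius $r\ll 2^n$, so all of them fit into a single $r$-disjoint family $\mathcal{U}_0$ with no colouring needed) together with a second family $\mathcal{U}_1$ of edge-middle segments $[n_j2^n+r,(n_j+1)2^n-r]$; you instead take large stars of radius $2^{n-1}$, which by themselves cover everything, and split them into two families by checkerboard parity, paying for this with the case analysis that gives the $2^{n-1}$ star-to-star separation. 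Both routes are elementary and both use $r\ge 4$ only to compare a power of $2$ with $r$ (the paper needs $2^r-2r>r$; you need $2^{r-1}>r$). Second, and more substantively, you caught a point the paper elides: under the strict definition $d(U,V)>r$ of $r$-disjointness, the pair of copies $(n,n+1)$ is genuinely borderline, since the asymptotic-union offset is exactly $c=n=r$ and the star at $v\in(2^n\ZZ)^n$ and the star at $(v,0)\in(2^n\ZZ)^{n+1}$ contain coinciding points of $\bigoplus\RR$, hence lie at distance exactly $r$. The paper's final sentence ("Since $d(X_{\omega+1}^{(i,n)},X_{\omega+1}^{(j,n)})\ge n=r$ \dots\ then $\mathcal{U}_0,\mathcal{U}_1$ are $r$-disjoint") glosses over this; your twin-merging of $S_v^{(n)}$ into $T_{(v,0)}$ repairs it cleanly (an alternative one-line fix is to run the construction with $r+1$ in place of $r$). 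Your cross-copy distance checks for merged-versus-merged and merged-versus-unmerged sets are sound, so the argument as a whole stands.
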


\begin{proof}
For every $r\in\NN$ and $r\geq4$, choose $n=r\in\NN$. For every $i\geq n$, let
\[\mathcal{U}_0^{(i)}=\{(\prod_{t=1}^i(n_t2^n-r,n_t2^n+r))\cap  X_{\omega+1}^{(i,n)}~|~n_t\in\ZZ\},\]
\[\mathcal{U}_1^{(i)}=\{(\prod_{t=1}^{j-1}(n_t2^n-r,n_t2^n+r)
\times [n_j2^n+r,(n_j+1)2^n-r]\times \prod_{t=j+1}^i(n_t2^n-r,n_t2^n+r))~\cap~  X_{\omega+1}^{(i,n)}~|~n_t\in\ZZ, 1\leq j\leq i\}\]
It is easy to see that $\mathcal{U}_0^{(i)}$ and $\mathcal{U}_1^{(i)}$ are $r$-disjoint and $2^n$-bounded families. Now for every $x=(x_1,\ldots,x_i)\in X_{\omega+1}^{(i,n)}\setminus(\bigcup\mathcal{U}_0^{(i)})$, there exists unique $j\in\{1,2,\cdots,i\}$ such that
$x_j\in [n_j2^n+r,(n_j+1)2^n-r]$. It follows that $x\in\mathcal{U}_1^{(i)}$. Therefore,
$\mathcal{U}_0^{(i)}\cup \mathcal{U}_1^{(i)}$ covers $X_{\omega+1}^{(i,n)}$. Let $\mathcal{U}_0=\bigcup_{i\geq n}\mathcal{U}_0^{(i)}$ and $\mathcal{U}_1=\bigcup_{i\geq n}\mathcal{U}_1^{(i)}$. Since $d(X_{\omega+1}^{(i,n)},X_{\omega+1}^{(j,n)})\geq n= r$ for every $i,j\geq n$ and $i\neq j$, then $\mathcal{U}_0,\mathcal{U}_1$  are $r$-disjoint and $2^n$-bounded families such that $\mathcal{U}_0\cup \mathcal{U}_1$ covers $as\bigsqcup_{i=n}^{\infty}X_{\omega+1}^{(i,n)}$.
\end{proof}

\begin{remark}
\label{remark}
By Lemma \ref{omega+1}, for every $r\in\NN$ and $r>1$, there exist $3r$-disjoint uniformly bounded families $\mathcal{U}_0,\mathcal{U}_1$ such that $\mathcal{U}_0\cup \mathcal{U}_1$ covers $as\bigsqcup_{i=3r}^{\infty}X_{\omega+1}^{(i,3r)}$. Let $\mathcal{V}_0=\{N_{r}(U)|~U\in\mathcal{U}_0\},\mathcal{V}_1=\{N_{r}(U)|~U\in\mathcal{U}_1\}$.
    Then $\mathcal{V}_0,\mathcal{V}_1$ are $r$-disjoint uniformly bounded families and $\mathcal{V}_0\cup \mathcal{V}_1$ covers $as\bigsqcup_{i=3r}^{\infty}N_{r}(X_{\omega+1}^{(i,n)})$, where $N_{r}(X_{\omega+1}^{(i,n)})$ is $r$-neighborhood of $X_{\omega+1}^{(i,n)}$ in $\mathbb{R}^i$. By similar argument, we obtain the following Lemma.

\end{remark}
\begin{lem}
\label{neighborofomega+t}
 For every $r\in\NN$ and $r>1$, there exist $n=3^{k-1}r\in\NN$ and $r$-disjoint uniformly bounded families $\mathcal{U}_0,\mathcal{U}_1,\ldots,\mathcal{U}_k$ such that
 $\mathcal{U}_0\cup\mathcal{U}_1\cup\ldots\cup\mathcal{U}_k$ covers
 $as\bigsqcup_{i=n}^{\infty}X_{\omega+k}^{(i,n)}$.
 \end{lem}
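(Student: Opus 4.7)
The plan is to prove this by induction on $k$, with the base case $k=1$ being exactly Lemma \ref{omega+1}. For the inductive step, I would assume the statement for $k-1$ and apply it at the enlarged scale $s = 3r$: this yields $n' = 3^{(k-1)-1}\cdot (3r) = 3^{k-1}r$, matching the target $n = 3^{k-1}r$ for $k$, together with $3r$-disjoint uniformly bounded families $\mathcal{V}_0,\dots,\mathcal{V}_{k-1}$ covering $\text{as}\bigsqcup_{i\geq n}X_{\omega+k-1}^{(i,n)}$.

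The first family $\mathcal{U}_0$ would mimic the lattice-box construction from Lemma \ref{omega+1}:
\[
\mathcal{U}_0^{(i)}=\Bigl\{\Bigl(\prod_{t=1}^{i}(n_t 2^n - r, n_t 2^n + r)\Bigr)\cap X_{\omega+k}^{(i,n)} : n_t\in\mathbb{Z}\Bigr\},\qquad \mathcal{U}_0=\bigcup_{i\geq n}\mathcal{U}_0^{(i)}.
\]
This is clearly $r$-disjoint and $2r$-bounded, and any point of $X_{\omega+k}^{(i,n)}$ not covered by $\mathcal{U}_0$ must have at least one coordinate lying in a gap interval $[n_j 2^n + r,(n_j+1)2^n - r]$, so it lies in some slab
\[
T_j^{n_j}=\{x\in X_{\omega+k}^{(i,n)}:x_j\in[n_j 2^n + r,(n_j+1)2^n - r]\}.
\]
Within $T_j^{n_j}$ the $j$-th coordinate is confined to a single gap of length $2^n-2r$, and the remaining $i-1$ coordinates have at most $k-1$ non-lattice values and thus parametrize (a subset of) $X_{\omega+k-1}^{(i-1,n)}$.

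The remaining $k$ families $\mathcal{U}_1,\dots,\mathcal{U}_k$ would be built by pulling back $\mathcal{V}_0,\dots,\mathcal{V}_{k-1}$ through the projection $T_j^{n_j}\to X_{\omega+k-1}^{(i-1,n)}$ that forgets the $j$-th coordinate, taking products with the corresponding gap intervals, and then combining over $n_j\in\mathbb{Z}$ (which are automatically separated by at least $2r$ along direction $j$) and over $j\in\{1,\dots,i\}$. This yields $k$ candidate families, each uniformly bounded by a constant multiple of $2^n$, and the $i$-index union across the asymptotic disjoint union is harmless by the distance bound $d(X_{\omega+k}^{(i,n)},X_{\omega+k}^{(i',n)})\geq n\geq r$ built into the definition of $\text{as}\bigsqcup$.

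The main obstacle is verifying that these combined families remain $r$-disjoint in the presence of overlaps between slabs along different directions $j\neq j'$, which occur precisely at those ``corner'' points where two or more coordinates are simultaneously in gaps. A naive combination does not give $r$-disjointness in $d_{\max}$. The factor of $3$ in the rescaling --- exactly mirroring the passage from Lemma \ref{omega+1} to Remark \ref{remark} --- is what provides the buffer: $3r$-disjointness of the $\mathcal{V}_\ell$ absorbs the loss incurred when different-direction slabs are merged, so that any two distinct pulled-back pieces end up separated by strictly more than $r$ in at least one coordinate. The technical core of the proof is therefore the case analysis distinguishing (i) two pieces drawn from the same slab direction but different gap indices, (ii) two pieces drawn from the same slab direction and gap index but different indexing tuples on the frozen coordinates, and (iii) two pieces drawn from different slab directions; in each case the $3r$-disjointness of $\mathcal{V}_\ell$ together with the gap structure along the differing coordinate produces the needed separation. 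Uniform boundedness is then a routine calculation.
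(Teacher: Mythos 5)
Your overall skeleton (induction on $k$, base case Lemma \ref{omega+1}, applying the inductive hypothesis at scale $3r$ so that $n=3^{k-1}r$ comes out right) matches the paper, but the way you propose to produce the families $\mathcal{U}_1,\dots,\mathcal{U}_k$ has a genuine gap, and it is exactly at the point you flag as ``the main obstacle.'' Your case (iii) cannot be repaired by the factor of $3$. Take a point $x\in X_{\omega+k}^{(i,n)}$ with two coordinates $x_j$ and $x_{j'}$ ($j\neq j'$) both lying deep in gap intervals. Then $x$ belongs to both slabs $T_j^{n_j}$ and $T_{j'}^{n_{j'}}$; its projection forgetting coordinate $j$ lies in some $V\in\mathcal{V}_\ell$, and its projection forgetting coordinate $j'$ lies in some $V'\in\mathcal{V}_{\ell}$ (same $\ell$ is perfectly possible, and nothing prevents it). The two pulled-back pieces $\pi_j^{-1}(V)\cap T_j^{n_j}$ and $\pi_{j'}^{-1}(V')\cap T_{j'}^{n_{j'}}$ are then two generally distinct members of the same family $\mathcal{U}_{\ell+1}$ that both contain $x$, hence are at distance $0$. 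The $3r$-disjointness of $\mathcal{V}_\ell$ is irrelevant here: it controls distances between distinct elements of $\mathcal{V}_\ell$ inside $X_{\omega+k-1}^{(i-1,n)}$, whereas the two offending pieces arise from projections onto \emph{different} coordinate hyperplanes, so no separation hypothesis on $\mathcal{V}_\ell$ is ever invoked. Assigning each point to a single preferred slab direction does not obviously fix this either, since pieces assigned to different directions can still come within distance $r$ of each other near the boundary of the ``deep in the gap'' condition.

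The paper's proof avoids the cross-direction problem by not fibering over a single gap coordinate at all. It adds only \emph{one} new family $\mathcal{U}_{m+1}$ at each inductive step, whose elements are products in which $m+1$ chosen coordinates range over gap intervals $[n_{j}2^n+r,(n_j+1)2^n-r]$ and \emph{every remaining coordinate is frozen at a specific point of $2^n\ZZ$}. Distinct such products always differ by a definite amount in some coordinate, so $r$-disjointness is immediate, and there is no overlap issue because each element pins down the full combinatorial type of the point. The coverage argument is then the complement of yours: if $x\in X_{\omega+m+1}^{(i,n)}$ is \emph{not} in $\bigcup\mathcal{U}_{m+1}^{(i)}$, then among its (at most $m+1$) non-lattice coordinates at least one lies within distance $r$ of $2^n\ZZ$, so $x\in N_r(X_{\omega+m}^{(i,n)})$; this set is covered by the families $\{N_r(V):V\in\mathcal{V}_\ell\}$, $\ell=0,\dots,m$, and this is the only place the $3r$-versus-$r$ buffer is spent. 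If you want to salvage your slab-and-pullback picture, you would need to restrict each slab to the locus where \emph{only} the coordinate $x_j$ is deep in a gap, which is essentially a disguised form of the paper's decomposition.
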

\begin{proof}
We will prove it by induction on $k$.
By Lemma \ref{omega+1}, the result is true for $k=1$. Assume that the result is true for $k= m$, then for every $r\in\NN$ and $r>1$, there exist $n=3^{m}r\in\NN$ and $3r$-disjoint uniformly bounded families $\mathcal{V}_0,\mathcal{V}_1,\ldots,\mathcal{V}_m$ such that
 $\mathcal{V}_0\cup\mathcal{V}_1\cup\ldots\cup\mathcal{V}_m$ covers
 $as\bigsqcup_{i=n}^{\infty}X_{\omega+m}^{(i,n)}$.
Now for $k=m+1$, let
$$\mathcal{U}_0=\{N_{r}(V)|~V\in\mathcal{V}_0\},\cdots,\mathcal{U}_m=\{N_{r}(V)|~V\in\mathcal{V}_m\}.$$
Then $\mathcal{U}_0,\mathcal{U}_1,\cdots, \mathcal{U}_m$ are $r$-disjoint and uniformly bounded families such that $\mathcal{U}_0\cup\mathcal{U}_1\cup\ldots\cup\mathcal{U}_m$ covers $as\bigsqcup_{i=n}^{\infty}N_{r}(X_{\omega+m}^{(i,n)})$.

Let
\[
\begin{split}
\mathcal{U}_{m+1}^{(i)}=\{\{x_{t}\}_{t=1}^{j_{1}-1}\times[n_{j_{1}}2^{n}+r,(n_{j_{1}}+1)2^{n}-r]\times(x_{t})_{t=j_{1}+1}^{j_{2}-1}\times[n_{j_{2}}2^{n}+r,(n_{j_{2}}+1)2^{n}-r]\times
\{x_{t}\}_{t=j_{2}+1}^{j_3-1}\times\\ \cdots\times\{x_{t}\}_{t=j_{m}+1}^{j_{m+1}-1} \times [n_{j_{m+1}}2^{n}+r,(n_{j_{m+1}}+1)2^{n}-r] \times
\{x_{t}\}_{t=j_{m+1}+1}^{i}|~x_{t}\in2^{n}\ZZ, n_{j_{k}}\in\ZZ, 1\leq k\leq m+1 \}.
\end{split}
\]
It is easy to see that $\mathcal{U}_{m+1}^{(i)}$ is $r$-disjoint and $2^{n}$-bounded.

Note that for every $i\geq n$,
\[
X_{\omega+m+1}^{(i,n)}\setminus \bigcup \mathcal{U}_{m+1}^{(i)}\subset N_{r}(X_{\omega+m}^{(i,n)})
\]
Indeed, for any $x=\{x_{t}\}_{t=1}^{i}\in X_{\omega+m+1}^{(i,n)}\setminus\bigcup\mathcal{U}_{m+1}^{(i)}$,
$\{x_{t}\}_{t=1}^{i}\in X_{\omega+m+1}^{(i,n)}$ implies that there exists at most $m+1$ coordinates $x_t$ such that $x_t\notin  2^n\ZZ$ and $x\notin  \bigcup\mathcal{U}_{m+1}^{(i)}$ implies that, among all the $x_t$ with $x_t\notin  2^n\ZZ$, there exists at least one $x_{t_0}$ such that $d(x_{t_0},2^n\ZZ)<r$. It follows that $x\in N_{r}(X_{\omega+m}^{(i,n)})$.

Since
\[d(X_{\omega+m+1}^{(i,n)},X_{\omega+m+1}^{(j,n)})>r \text{~for every~} i, j\geq n \text{~and~} i\neq j,\]
then $\mathcal{U}_{m+1}\doteq\bigcup_{i\geq n}\mathcal{U}_{m+1}^{(i)}$ is an $r$-disjoint uniformly bounded family of subsets and
\[
as\bigsqcup_{i=n}^{\infty}X_{\omega+m+1}^{(i,n)}\subset(\bigcup\mathcal{U}_{m+1})\cup\bigcup_{i=n}^{\infty}N_{r}(X_{\omega+m}^{(i,n)}).
\]
Therefore, $\mathcal{U}_0\cup\mathcal{U}_1\cup\ldots\cup\mathcal{U}_{m+1}$ covers
 $as\bigsqcup_{i=n}^{\infty}X_{\omega+m+1}^{(i,n)}$. So the result is true for $k=m+1$.
\end{proof}

\begin{prop}
\label{prop2}
coasdim$(X_{\omega+k})\leq \omega+k$.
\end{prop}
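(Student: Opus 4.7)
The plan is to use Lemma \ref{neighborofomega+t} to peel off all but finitely many of the building blocks of $X_{\omega+k}$, reducing the problem to controlling a finite-dimensional remainder. By the definition of coasdim, to prove coasdim$(X_{\omega+k})\leq\omega+k$ it suffices to produce, for every $r>0$, $r$-disjoint uniformly bounded families $\mathcal{U}_0,\ldots,\mathcal{U}_k$ of subsets of $X_{\omega+k}$ such that the complement $X_{\omega+k}\setminus\bigcup(\bigcup_{i=0}^{k}\mathcal{U}_i)$ has coasdim $<\omega$, i.e., finite asymptotic dimension.

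Given $r>0$, I would apply Lemma \ref{neighborofomega+t} to obtain $n=3^{k-1}r$ and $r$-disjoint uniformly bounded families $\mathcal{U}_0,\ldots,\mathcal{U}_k$ whose union covers $\mathrm{as}\bigsqcup_{i=n}^{\infty}X_{\omega+k}^{(i,n)}$. The elementary observation that powers the reduction is that $X_{\omega+k}^{(i)}=X_{\omega+k}^{(i,i)}\subseteq X_{\omega+k}^{(i,n)}$ for every $i\geq n$: since $2^{i}\mathbb{Z}\subseteq 2^{n}\mathbb{Z}$, a coordinate lying in $2^{i}\mathbb{Z}$ automatically lies in $2^{n}\mathbb{Z}$, so the set of coordinates violating ``$\in 2^{n}\mathbb{Z}$'' is contained in the set violating ``$\in 2^{i}\mathbb{Z}$''. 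Hence $\mathcal{U}_0\cup\cdots\cup\mathcal{U}_k$ already covers the tail $\mathrm{as}\bigsqcup_{i=n}^{\infty}X_{\omega+k}^{(i)}$ of $X_{\omega+k}$, and the uncovered portion is contained in the finite asymptotic union $\mathrm{as}\bigsqcup_{i=1}^{n-1}X_{\omega+k}^{(i)}$.

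To bound the coasdim of this remainder, note that each $X_{\omega+k}^{(i)}$ embeds into $\mathbb{R}^{i}$ with its max metric, which is preserved by the asymptotic-union construction on each single component, so coasdim$(X_{\omega+k}^{(i)})\leq i$. Iterating Lemma \ref{lem:coasdim1} over the $n-1$ components and using monotonicity of coasdim under taking subsets then yields coasdim of the complement $\leq n-1<\omega$, which is precisely the inequality required by the inductive definition of coasdim$(X_{\omega+k})\leq\omega+k$. I do not anticipate a substantive obstacle here: the real geometric work has been carried out in the inductive construction behind Lemma \ref{neighborofomega+t}, and the remaining steps are routine set-theoretic verifications, namely the comparison $X_{\omega+k}^{(i,i)}\subseteq X_{\omega+k}^{(i,n)}$ and the fact that Lemma \ref{lem:coasdim1} extends to any finite union by induction on the number of pieces.
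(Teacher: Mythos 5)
Your proposal is correct and follows essentially the same route as the paper's proof: apply Lemma \ref{neighborofomega+t} to cover the tail $\mathrm{as}\bigsqcup_{i=n}^{\infty}X_{\omega+k}^{(i)}$ via the inclusion $X_{\omega+k}^{(i)}=X_{\omega+k}^{(i,i)}\subseteq X_{\omega+k}^{(i,n)}$, then bound the finite-dimensional remainder $\mathrm{as}\bigsqcup_{i=1}^{n-1}X_{\omega+k}^{(i)}$ using Lemma \ref{lem:coasdim1} and the embedding into $\mathbb{R}^{n-1}$. The only difference is that you spell out the elementary inclusion argument in slightly more detail than the paper does.
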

\begin{proof}
For every $r>0$, by Lemma \ref{neighborofomega+t}, there exist $n=n(r)\in\NN$ and $r$-disjoint uniformly bounded families $\mathcal{U}_0,\mathcal{U}_1,\ldots,\mathcal{U}_k$ such that
 $\mathcal{U}_0\cup\mathcal{U}_1\cup\ldots\cup\mathcal{U}_k$ covers
 $as\bigsqcup_{i=n}^{\infty}X_{\omega+k}^{(i,n)}$.  Since $X_{\omega+k}^{(i)}=X_{\omega+k}^{(i,i)}\subset X_{\omega+k}^{(i,n)}$ for $i\geq n$,
$X_{\omega+k}\setminus \bigcup(\mathcal{U}_0\cup\ldots\mathcal{U}_k)\subseteq as\bigsqcup_{i=1}^{n-1}X_{\omega+k}^{(i)}$.
By Lemma \ref{lem:coasdim1},
\[\text{coasdim}(X_{\omega+k}\setminus \bigcup(\mathcal{U}_0\cup\ldots\mathcal{U}_k))\leq\text{coasdim}(as\bigsqcup_{i=1}^{n-1}X_{\omega+k}^{(i)})\leq\text{coasdim}(as\bigsqcup_{i=1}^{n-1}\RR^{i})\leq\text{coasdim}(\RR^{n-1})<\omega.\]
So, by definition, coasdim$(X_{\omega+k})\leq \omega+k$.\end{proof}

\begin{thm}
\label{thm:trasdim}
trasdim$(X_{\omega+k})=\omega+k$ for every $k\in\NN$.
\end{thm}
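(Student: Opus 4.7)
The plan is simply to combine the two bounds established earlier in this section. The theorem asserts equality of two ordinals, so I need to show both $\text{trasdim}(X_{\omega+k}) \leq \omega+k$ and $\text{trasdim}(X_{\omega+k}) \geq \omega+k$, and both halves are already in hand modulo one invocation of the comparison between coasdim and trasdim.

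For the upper bound I would apply Proposition \ref{prop2}, which gives $\text{coasdim}(X_{\omega+k}) \leq \omega+k$, and then feed this into Proposition \ref{proprelation} to conclude $\text{trasdim}(X_{\omega+k}) \leq \omega+k$. The lower bound is Proposition \ref{prop1}, which states that $\text{trasdim}(X_{\omega+k}) \leq \omega+k-1$ is not true; this is exactly the statement that $\text{trasdim}(X_{\omega+k})$ is not any ordinal strictly less than $\omega+k$, since every ordinal $\beta < \omega+k$ satisfies $\beta \leq \omega+k-1$ (here $\omega+k-1$ denotes $\omega+(k-1)$ when $k\geq 1$, and the base case $k=1$ already appears in \cite{omega+1}). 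Putting the two bounds together forces $\text{trasdim}(X_{\omega+k}) = \omega+k$.

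Since all the technical content has been carried out in the earlier propositions, there is no genuine obstacle left at this stage; the real work was establishing Proposition \ref{prop1} (the partition/Lebesgue-type lower bound argument on $[0,6B]^{m+k}$) and Proposition \ref{prop2} (the inductive cover construction via Lemma \ref{neighborofomega+t} together with the fact that the residual subspace $\text{as}\bigsqcup_{i=1}^{n-1} X_{\omega+k}^{(i)}$ embeds in $\mathbb{R}^{n-1}$ and hence has finite asymptotic dimension). The theorem itself is a one-line corollary of Propositions \ref{prop1}, \ref{prop2}, and \ref{proprelation}.
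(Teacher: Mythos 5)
Your proposal is correct and is essentially the paper's own proof: the upper bound comes from Proposition \ref{prop2} combined with the coasdim-to-trasdim comparison (the paper cites Lemma \ref{lem:coasdim}, you cite its generalization Proposition \ref{proprelation}, which makes no difference here), and the lower bound is Proposition \ref{prop1}. Your extra remark that every ordinal $\beta<\omega+k$ satisfies $\beta\leq\omega+(k-1)$ is a correct and slightly more careful justification of the final step than the paper bothers to write out.
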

\begin{proof}
By Lemma \ref{lem:coasdim} and the Proposition \ref{prop2},  trasdim$(X_{\omega+k})\leq\omega+k$.
By Proposition \ref{prop1}, we obtain that trasdim$(X_{\omega+k})=\omega+k$.
\end{proof}

\begin{thm}
\label{thm:coasdim}
coasdim$(X_{\omega+k})=\omega+k$ for every $k\in\NN$.
\end{thm}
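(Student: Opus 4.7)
The upper bound $\text{coasdim}(X_{\omega+k})\leq\omega+k$ is already established in Proposition \ref{prop2}, so the plan is to verify the matching lower bound by combining Theorem \ref{thm:trasdim} with the monotonicity statement of Proposition \ref{proprelation}.

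The strategy is a contradiction argument. Suppose $\text{coasdim}(X_{\omega+k})<\omega+k$. Then there exists an ordinal $\beta<\omega+k$ with $\text{coasdim}(X_{\omega+k})\leq\beta$. By Proposition \ref{proprelation}, which transfers any coasdim bound to a trasdim bound of the same ordinal, this forces $\text{trasdim}(X_{\omega+k})\leq\beta<\omega+k$. This contradicts Theorem \ref{thm:trasdim}, where we showed $\text{trasdim}(X_{\omega+k})=\omega+k$.

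Combining the two inequalities yields $\text{coasdim}(X_{\omega+k})=\omega+k$, as desired.

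The main work has already been done in Proposition \ref{prop1} (the hard lower bound using Lebesgue's partition theorem on skeletons of cubes) and in Proposition \ref{prop2} (the upper bound via the explicit covers constructed in Lemma \ref{neighborofomega+t}); no new obstacles arise at this stage.
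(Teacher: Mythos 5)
Your proposal is correct and follows essentially the same route as the paper: the paper likewise rules out $\mathrm{coasdim}(X_{\omega+k})\leq\omega+k-1$ by transferring such a bound to $\mathrm{trasdim}$ (via Lemma \ref{lem:coasdim}, the special case of Proposition \ref{proprelation} you invoke) and contradicting Proposition \ref{prop1}, then concludes with Proposition \ref{prop2}. The only cosmetic difference is that you cite the general Proposition \ref{proprelation} and Theorem \ref{thm:trasdim} where the paper cites the special-case lemma and Proposition \ref{prop1} directly.
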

\begin{proof}
By the Proposition \ref{prop1} and the Lemma \ref{lem:coasdim}, we can obtain that coasdim$(X_{\omega+k})\leq\omega+k-1$ is not true.
Then coasdim$(X_{\omega+k})=\omega+k$ by the Proposition \ref{prop2}.
\end{proof}

\end{section}

\begin{section}{A metric space with complementary-finite asymptotic dimension and transfinite asymptotic dimension $2\omega$}\

In this section, we will construct a metric space $Y_{2\omega}$ by taking asymptotic union of all the metric spaces $Y_{\omega+k}$ which is coarsely equivalent to $X_{\omega+k}$ for any $k\in\mathbb{N}$.\

For every $k,i\in\mathbb{N}$, let
\[Y_{\omega+k}^{(i)}=\{(x_1,\ldots,x_i)\in (2^k\mathbb{Z})^i~|~|\{j~|~x_j\notin 2^i\mathbb{Z}\}|\leq k\},
Y_{\omega+k}=\text{as}\bigsqcup_{i=1}^{\infty}Y_{\omega+k}^{(i)} \text{ and }
Y_{2\omega}=\text{as}\bigsqcup_{k=1}^{\infty}Y_{\omega+k},
\]
where $Y_{\omega+k}$ be a subspace of the metric space $\text{as}\bigsqcup_{j=1}^{\infty}\RR^{j}$ for each $j\in\mathbb{N}$.

\begin{prop}\rm
\label{prop3}
coasdim$(Y_{2\omega})=2\omega$ and trasdim$(Y_{2\omega})=2\omega$.
\end{prop}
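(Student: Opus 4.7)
I would prove both equalities by sandwiching coasdim$(Y_{2\omega})$ between $2\omega$ and $2\omega$, and then invoke Proposition~\ref{proprelation} to upgrade the coasdim upper bound to a trasdim upper bound; the trasdim lower bound is obtained by the same monotonicity argument used for coasdim.

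\noindent\textbf{Lower bound.} The coordinate-rounding map sending $(x_1,\dots,x_i)\in X_{\omega+k}^{(i)}$ to the nearest point of $(2^k\ZZ)^i$ preserves the defining condition $|\{j:x_j\notin 2^i\ZZ\}|\leq k$ and moves points by at most $2^k$, so $Y_{\omega+k}^{(i)}$ is a $2^k$-net in $X_{\omega+k}^{(i)}$. Consequently $Y_{\omega+k}$ is coarsely equivalent to $X_{\omega+k}$, and Lemma~\ref{lem:coarseequivalent} together with Theorems~\ref{thm:trasdim} and \ref{thm:coasdim} give trasdim$(Y_{\omega+k})=$ coasdim$(Y_{\omega+k})=\omega+k$. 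Since each $Y_{\omega+k}$ embeds isometrically as a summand of $Y_{2\omega}$ and both invariants are monotone under passage to subspaces (by restricting the defining families), we obtain coasdim$(Y_{2\omega})\geq\omega+k$ and trasdim$(Y_{2\omega})\geq\omega+k$ for every $k\in\NN$. Because every ordinal strictly below $2\omega$ is finite or of the form $\omega+m$, both invariants are at least $2\omega$.

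\noindent\textbf{Upper bound.} Since $2\omega$ is a limit ordinal with $n(2\omega)=0$, unfolding the definition reduces coasdim$(Y_{2\omega})\leq 2\omega$ to the following: for each $r>0$, produce one $r$-disjoint uniformly bounded family $\mathcal{U}_0$ with coasdim$(Y_{2\omega}\setminus\bigcup\mathcal{U}_0)<2\omega$. Given $r$, pick integers $k_0,i_0>r$ with also $2^{k_0}>r$. The asymptotic-union metric then furnishes three simultaneous separations: for $k_1\neq k_2$ with $k_1,k_2\geq k_0$, $d(Y_{\omega+k_1},Y_{\omega+k_2})\geq k_0>r$; for $i_1\neq i_2$ with $i_1,i_2\geq i_0$, $d(Y_{\omega+k}^{(i_1)},Y_{\omega+k}^{(i_2)})\geq i_0>r$ inside each $Y_{\omega+k}$; and any two distinct points of $Y_{\omega+k}^{(i)}\subseteq(2^k\ZZ)^i$ are at max-distance $\geq 2^k>r$. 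Hence the family of singletons
$$\mathcal{U}_0=\bigl\{\{x\}\,:\,x\in Y_{\omega+k}^{(i)},\ k\geq k_0,\ i\geq i_0\bigr\}$$
is $0$-bounded and $r$-disjoint. Its complement lies in $A\cup B$, where $A=\bigsqcup_{k<k_0}Y_{\omega+k}$ and $B=\text{as}\bigsqcup_{k\geq k_0}\,\text{as}\bigsqcup_{i<i_0}Y_{\omega+k}^{(i)}$; iterated use of Lemma~\ref{lem:coasdim1} yields coasdim$(A)\leq\omega+(k_0-1)$. For $B$, each $Y_{\omega+k}^{(i)}$ inherits the $\RR^i$ metric and has coasdim $\leq i$, with $r'$-disjoint cover bounds depending only on $i$ and $r'$; the inner finite asymptotic union $\text{as}\bigsqcup_{i<i_0}Y_{\omega+k}^{(i)}$ therefore has coasdim $\leq i_0-1$ uniformly in $k$; and the outer asymptotic union over $k\geq k_0$ preserves this bound by the standard summand-separation argument. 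Therefore coasdim$(B)\leq i_0-1<\omega$, and Lemma~\ref{lem:coasdim1} gives coasdim$(Y_{2\omega}\setminus\bigcup\mathcal{U}_0)\leq\omega+(k_0-1)<2\omega$. This proves coasdim$(Y_{2\omega})\leq 2\omega$, and Proposition~\ref{proprelation} upgrades this to trasdim$(Y_{2\omega})\leq 2\omega$.

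\noindent\textbf{Main obstacle.} The one genuinely technical step is the uniform-in-$k$ coasdim bound on $B$: verifying that the asymptotic union over all $k\geq k_0$ of the low-dimensional slices $\text{as}\bigsqcup_{i<i_0}Y_{\omega+k}^{(i)}$ really has finite coasdim, rather than a coasdim that drifts up with $k$. This rests on the $(2^k\ZZ)^i$ lattice structure, which forces $r'$-disjoint covers to degenerate to singletons as soon as $2^k>r'$, making the required uniform cover bounds automatic.
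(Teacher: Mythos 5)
Your proof is correct and follows the same overall strategy as the paper: the lower bound via the coarse equivalence $Y_{\omega+k}\sim X_{\omega+k}$ and monotonicity, and the upper bound by covering a tail with $r$-disjoint singletons, bounding the coasdim of what remains below $2\omega$, and invoking Proposition~\ref{proprelation}. The one substantive difference is in the upper bound: the paper takes $\mathcal{U}=\{\{x\}: x\in as\bigsqcup_{k=n+1}^{\infty}Y_{\omega+k}\}$ and asserts this is $n$-disjoint, which as written overlooks the low-dimensional slices --- e.g.\ the origins of $Y_{\omega+k}^{(1)}$ and $Y_{\omega+k}^{(2)}$ lie at distance $1$ in the asymptotic-union metric, so singletons over an entire $Y_{\omega+k}$ are not $n$-disjoint for $n\geq 1$. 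You restrict the singleton family to $k\geq k_0$ \emph{and} $i\geq i_0$, which genuinely makes it $r$-disjoint, at the cost of having to show that the extra leftover piece $B=as\bigsqcup_{k\geq k_0}as\bigsqcup_{i<i_0}Y_{\omega+k}^{(i)}$ has finite asymptotic dimension uniformly in $k$; your observation that the $(2^{k}\ZZ)^{i}$ lattice structure forces covers to degenerate to singletons once $2^{k}>r'$ is exactly what makes that uniformity work. So your version is slightly longer but patches a small gap in the paper's own argument; the two proofs are otherwise the same.
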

\begin{proof}
For any $k\in\mathbb{N}$, since $Y_{\omega+k}\subseteq X_{\omega+k}$ and  $X_{\omega+k}\subseteq N_{2^{k}}(Y_{\omega+k})$, $Y_{\omega+k}$ and $X_{\omega+k}$ are coarse equivalent. Then
\[\text{
trasdim$(Y_{\omega+k})=$trasdim$(X_{\omega+k})=\omega+k$ and coasdim$(Y_{\omega+k})= \text{coasdim} (X_{\omega+k})=\omega+k$
}\]
by Lemma \ref{lem:coarseequivalent}.
It follows that coasdim$(Y_{2\omega})\geq2\omega$ and trasdim$(Y_{2\omega})\geq2\omega$.\

For every $n>0$, let
\[
\mathcal{U}=\{\{x\}~|~x\in as\bigsqcup_{k=n+1}^{\infty}Y_{\omega+k}\},
\]
then $\mathcal{U}$ is $n$-disjoint and uniformly bounded and
$$Y_{2\omega}\setminus \bigcup \mathcal{U}=as\bigsqcup_{k=1}^{n}Y_{\omega+k}.$$
Note that by Lemma~\ref{lem:coasdim1},
\[
\text{coasdim}(as\bigsqcup_{k=1}^{n}Y_{\omega+k})\leq\text{coasdim}(as\bigsqcup_{k=1}^{n}X_{\omega+k})\leq\text{max}_{1\leq k\leq n}\{\text{coasdim}(X_{\omega+k})\}=\omega+n<2\omega,
\]
so coasdim$(Y_{2\omega}\setminus \bigcup \mathcal{U})<2\omega.$ Then by the definition of complementary-finite asymptotic dimension,
coasdim$(Y_{2\omega})\leq2\omega$ and hence trasdim$(Y_{2\omega})\leq2\omega$ by Proposition \ref{proprelation}.
Therefore, coasdim$(Y_{2\omega})=2\omega$ and trasdim$(Y_{2\omega})=2\omega$.

\end{proof}
\begin{remark}
 Taras Radul has also constructed a metric space with the same property simultaneously in \cite{2omega}.
\end{remark}

\end{section}\

\begin{section}{Asymptotic dimension growth, transfinite asymptotic dimension and finite decomposition complexity}\

In this section, we will give examples to show that asymptotic dimension growth is independent of transfinite asymptotic and a similar result holds for asymptotic dimension growth and finite decomposition complexity.\

\begin{defi}\cite{polynomialdimensiongrowth}
Let $X$ be a metric space and let $\mathcal{U}$ be a cover of $X$.  The \emph{Lebesgue number} $L(\mathcal{U})$ of a cover $\mathcal{U}$ is defined to be
\[
\text{inf}_{x\in X}\text{sup}\{r>0~|~\exists~U\in\mathcal{U} \text{~such that~} B(x,r)\subseteq U\}.
\]
The \emph{multiplicity} $m(\mathcal{U})$ of a cover $\mathcal{U}$ is the maximal number of
elements of $\mathcal{U}$ with a nonempty intersection. We define a function\[
ad_{X}(\lambda) = \text{min}\{m(\mathcal{U})~|~\mathcal{U}\text{~ is a uniformly bounded cover of~} X \text{~and~}L(\mathcal{U})\geq\lambda \}- 1.\]
Clearly, $ad_{X}(\lambda)$ is monotone. We call this function the \emph{asymptotic dimension function}
of $X$.
\end{defi}
\

\begin{remark}
Let $X$ be a metric space. If $X$ has finite asymptotic dimension, then
$\text{lim}_{\lambda\rightarrow\infty}ad_{X}(\lambda)=$asdim$(X)$.
\end{remark}
\

Now we give an estimation of dimension growth of $Y_{2\omega}$.

\begin{prop}
\label{estr-dim}
For every $r>4$, $ad_{Y_{2\omega}}(\frac{r}{2})\leq \frac{r(r+3)}{2}+3^{r-1}r+1$.

\end{prop}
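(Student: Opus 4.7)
My plan is to produce an explicit uniformly bounded cover $\mathcal{V}$ of $Y_{2\omega}$ with Lebesgue number at least $r/2$ and multiplicity at most $\frac{r(r+3)}{2}+3^{r-1}r+2$; by definition of $ad_X$, this gives $ad_{Y_{2\omega}}(r/2)\le m(\mathcal{V})-1\le\frac{r(r+3)}{2}+3^{r-1}r+1$. The cover will be built as the union of two blocks, handling respectively the ``high-index bulk'' of the first $r$ layers and a ``low-index remainder plus far tail''.

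\emph{Block 1 (high-index bulk).} For each $k=1,\ldots,r$ I apply Lemma \ref{neighborofomega+t} with parameter $r$ and index $k$ to obtain $k+1$ uniformly bounded, $r$-disjoint families $\mathcal{U}_0^{(k)},\ldots,\mathcal{U}_k^{(k)}$ whose union covers $\mathrm{as}\bigsqcup_{i\ge 3^{k-1}r}X_{\omega+k}^{(i,\,3^{k-1}r)}$. Since $2^i\mathbb{Z}\subseteq 2^{3^{k-1}r}\mathbb{Z}$ for $i\ge 3^{k-1}r$, we have $Y_{\omega+k}^{(i)}\subseteq X_{\omega+k}^{(i,i)}\subseteq X_{\omega+k}^{(i,\,3^{k-1}r)}$ for such $i$, so these families also cover the high-index portion $\mathrm{as}\bigsqcup_{i\ge 3^{k-1}r}Y_{\omega+k}^{(i)}$. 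I then replace every $U$ by its open $r/2$-neighborhood $N_{r/2}(U)$. Strict $r$-disjointness of the originals keeps the thickenings pairwise disjoint within each family, while $B(y,r/2)\subseteq N_{r/2}(U)$ for $y\in U$ delivers Lebesgue number $r/2$. In total, Block 1 contributes $\sum_{k=1}^{r}(k+1)=\frac{r(r+3)}{2}$ families, each of within-family multiplicity $1$.

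\emph{Block 2 (low-index remainder and far tail).} What Block 1 leaves uncovered is contained in
\[
H\sqcup T\ :=\ \Bigl(\mathrm{as}\bigsqcup_{k=1}^{r}\mathrm{as}\bigsqcup_{i<3^{k-1}r}Y_{\omega+k}^{(i)}\Bigr)\sqcup\mathrm{as}\bigsqcup_{k>r}Y_{\omega+k}.
\]
For the tail $T$, the key observation is that for $k>r$ any two distinct points of $Y_{\omega+k}^{(i)}\subseteq(2^k\mathbb{Z})^i$ differ in some coordinate by at least $2^k>2^r>r$, so $T$ is entirely $r$-discrete under $d_{\max}$; covering $T$ by the open $r/2$-balls around its own points yields a cover of multiplicity $1$ with Lebesgue number $r/2$. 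For the head $H$, each $Y_{\omega+k}^{(i)}$ with $i<3^{k-1}r\le 3^{r-1}r$ has asymptotic dimension at most $i<3^{r-1}r$, and the asymptotic-union structure preserves this bound, so $\mathrm{asdim}(H)\le 3^{r-1}r-1$; hence $H$ admits a uniformly bounded cover with Lebesgue number $r/2$ and multiplicity at most $3^{r-1}r$. Gluing these two sub-covers (which live in $r$-separated pieces of $Y_{2\omega}$, apart from a thin interface) produces a Block-2 cover of $H\sqcup T$ with multiplicity at most $3^{r-1}r+2$.

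Taking the union of Block 1 and Block 2 gives $\mathcal{V}$, which is uniformly bounded, has Lebesgue number $\ge r/2$, and multiplicity at most $\frac{r(r+3)}{2}+3^{r-1}r+2$, yielding the claim. The main obstacle is the multiplicity bookkeeping when combining the blocks: because consecutive spaces $Y_{\omega+k},Y_{\omega+k+1}$ in $Y_{2\omega}$ are only $k$ apart, the $r/2$-thickenings in Block 1 for different indices $k\le r$ can spill across one another and into the Block-2 region near the interface $i=3^{k-1}r$. The argument must exploit that each individual family contributes at most $1$ to the multiplicity at any point (strict $r$-disjointness) and that the total number of families a single point meets is at most the sum across blocks; ensuring this requires choosing the uniform bound $B$ in the underlying Lemma \ref{partition2} applications sufficiently large relative to $r$ so that the thickened Lemma~\ref{neighborofomega+t}-families sit cleanly inside the corresponding high-index strata before the union is assembled.
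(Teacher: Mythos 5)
Your proposal is correct and follows essentially the same route as the paper: cover the high-index strata of each $Y_{\omega+k}$, $k\le r$, by the $\sum_{k=1}^{r}(k+1)=\frac{r(r+3)}{2}$ families from Lemma~\ref{neighborofomega+t}, cover the low-index remainder using its finite asymptotic dimension (the paper uses the cruder bound $3^{r-1}r$, hence $3^{r-1}r+1$ families), cover the tail $k>r$ by singletons, and then take $\frac r2$-neighborhoods so that each $r$-disjoint family contributes at most $1$ to the multiplicity. The interface worries in your last paragraph are unnecessary, since the multiplicity is bounded simply by the total number of families.
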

\begin{proof}
For every $r\in\NN$ and $r\geq4$, by Lemma \ref{neighborofomega+t}, there exist $n=3^{k-1}r\in\NN$ and $r$-disjoint uniformly bounded families $\mathcal{U}_0,\mathcal{U}_1,\ldots,\mathcal{U}_k$ such that
 $\mathcal{U}_0\cup\mathcal{U}_1\cup\ldots\cup\mathcal{U}_k$ covers
 $as\bigsqcup_{i=n}^{\infty}X_{\omega+k}^{(i,n)}$.
Since for $i\geq n$, $Y_{\omega+k}^{(i)}\subset X_{\omega+k}^{(i)}\subset X_{\omega+k}^{(i,n)}$, there are
$r$-disjoint uniformly bounded families  $\mathcal{U}_0^{(k)},...,\mathcal{U}_k^{(k)}$ such that
$\bigcup_{i=0}^k\mathcal{U}_i^{(k)}$ covers $as\bigsqcup_{i=n}^{\infty}Y_{\omega+k}^{(i)}$.
It follows that there are $\frac{r(r+3)}{2}$ uniformly bounded $r$-disjoint families $\{\mathcal{V}_{i}\}_{i=1}^{\frac{r(r+3)}{2}}$ whose union covers $as\bigsqcup_{1\leq k\leq r}(as\bigsqcup_{i=n}^{\infty}Y_{\omega+k}^{(i)})$.

Since $Y_{\omega+k}^{(i)}\subseteq \mathbb{R}^i$, asdim$(as\bigsqcup_{i=1}^nY_{\omega+k}^{(i)})\leq n$. By Lemma \ref{lem:coasdim1}, it follows that
\[
\text{asdim}(as\bigsqcup_{1\leq k\leq r}(as\bigsqcup_{i=1}^nY_{\omega+k}^{(i)}))\leq n= 3^{r-1}r.
\]
So there exist $r$-disjoint uniformly bounded families $\{\mathcal{V}_{i}\}_{i=\frac{r(r+3)}{2}+1}^{\frac{r(r+3)}{2}+3^{r-1}r+1}$ such that
$$\bigcup_{\frac{r(r+3)}{2}+1\leq i\leq\frac{r(r+3)}{2}+3^{r-1}r+1}\mathcal{V}_{i}=as\bigsqcup_{1\leq k\leq r}(as\bigsqcup_{i=1}^nY_{\omega+k}^{(i)}).$$
Let
$$\mathcal{V}_{0}=\{\{x\}|x\in as\bigsqcup_{ k> r}(as\bigsqcup_{i=1}^{\infty}Y_{\omega+k}^{(i)})\},$$
then $\mathcal{V}_{0}$ is an $r$-disjoint uniformly bounded family which covers $as\bigsqcup_{ k> r}(as\bigsqcup_{i=1}^{\infty}Y_{\omega+k}^{(i)})$.\

Let
$$\mathcal{U}=\bigcup_{0\leq i\leq \frac{r(r+3)}{2}+3^{r-1}r+1}N_{\frac{r}{2}}(\mathcal{V}_i).$$
Then $\mathcal{U}$ is a cover of $Y_{2\omega}$ with Lebesgue number $L(\mathcal{U})\geq\frac{r}{2}$ and multiplicity $m(\mathcal{U})\leq \frac{r(r+3)}{2}+3^{r-1}r+2$ and hence
$ad_{Y_{2\omega}}(\frac{r}{2})\leq \frac{r(r+3)}{2}+3^{r-1}r+1$.
\end{proof}
\begin{lem}\rm(\cite{Engelking} Theorem 1.8.20)
\label{lem:Lebesgue}
If $\mathcal{U}$ is a finite closed cover of the $n$-cube $I^{n}$ no member of which meets opposite faces of $I^{n}$,
then $\mathcal{U}$ contains $n+1$ members with a non-empty intersection.
\end{lem}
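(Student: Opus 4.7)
My plan is to argue the contrapositive: assume the finite closed cover $\mathcal{U} = \{U_1, \ldots, U_k\}$ of $I^n$ has multiplicity at most $n$ and that no member meets a pair of opposite faces; I will derive a contradiction using Lemma~\ref{partition} together with a Brouwer-type degree argument.

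For each coordinate $i \in \{1, \ldots, n\}$ I set
\[
A_i^{+} = \bigcup\{U \in \mathcal{U} : U \cap F_i^{+} \neq \emptyset\}, \qquad
A_i^{-} = \bigcup\{U \in \mathcal{U} : U \cap F_i^{-} \neq \emptyset\}.
\]
By the face-avoidance hypothesis these are disjoint closed sets containing $F_i^{+}$ and $F_i^{-}$ respectively. Normality of $I^n$ yields disjoint open sets $W_i^{+} \supset A_i^{+}$ and $W_i^{-} \supset A_i^{-}$; setting $L_i = I^n \setminus (W_i^{+} \cup W_i^{-})$ gives a closed partition of $I^n$ between $F_i^{+}$ and $F_i^{-}$ that is disjoint from $A_i^{+} \cup A_i^{-}$, so any $U \in \mathcal{U}$ meeting $L_i$ must miss both faces $F_i^{\pm}$. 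Putting $L_0' = I^n$ and $L_i' = L_{i-1}' \cap L_i$, each $L_i'$ is a closed partition of $L_{i-1}'$ between $L_{i-1}' \cap F_i^{+}$ and $L_{i-1}' \cap F_i^{-}$, so Lemma~\ref{partition} delivers a point $x \in L_n' \neq \emptyset$; at such an $x$ every member of $\mathcal{U}$ containing $x$ avoids every face of $I^n$.

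The main obstacle is now converting this single interior witness into $n+1$ overlapping members of $\mathcal{U}$ --- the partition argument alone only places $x$ in \emph{interior} elements without bounding their number from below. My plan for the closing step is to pass to a small open thickening $\{V_j\}$ of $\mathcal{U}$ that still has multiplicity at most $n$ and retains face-avoidance (possible by compactness of $I^n$ for sufficiently small thickening radius), take a subordinate partition of unity $\{\phi_j\}_{j=1}^{k}$, and form the nerve map $f : I^n \to \Delta^{k-1}$ by $f(y) = \sum_j \phi_j(y)\, e_j$. The multiplicity bound forces $f(I^n)$ into the $(n-1)$-skeleton of $\Delta^{k-1}$; composing with a simplicial map into $\partial I^n$ built from the face-avoidance sign assignments $\sigma_{ij} \in \{+,-\}$ (chosen so that $V_j \cap F_i^{\sigma_{ij}} = \emptyset$) yields a continuous map $I^n \to \partial I^n$ whose restriction to $\partial I^n$ is homotopic to the identity, contradicting Brouwer's no-retraction theorem. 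The subtle part --- and the classical heart of the argument in \cite{Engelking} --- is choosing the simplicial map so that the resulting $I^n \to \partial I^n$ is genuinely face-preserving on $\partial I^n$, which is where both the multiplicity bound and the face-avoidance hypothesis are used simultaneously.
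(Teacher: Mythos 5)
First, note that the paper does not prove this statement at all: it is quoted directly from Engelking (Theorem 1.8.20) and used as a black box in the proof of Lemma~\ref{lem:greater}, so there is no in-paper argument to compare yours with; your proposal has to stand on its own, and it does not. The first half breaks at its opening claim. The sets $A_i^{+}=\bigcup\{U\in\mathcal{U}: U\cap F_i^{+}\neq\emptyset\}$ and $A_i^{-}=\bigcup\{U\in\mathcal{U}: U\cap F_i^{-}\neq\emptyset\}$ need not be disjoint: the hypothesis only forbids a \emph{single} member from meeting both opposite faces, while a member meeting $F_i^{+}$ may perfectly well intersect a different member meeting $F_i^{-}$. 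Already in $I^{1}$ with $\mathcal{U}=\{[0,0.6],[0.4,1]\}$ one has $A_1^{+}\cap A_1^{-}=[0.4,0.6]$ and $A_1^{+}\cup A_1^{-}=I^{1}$, so no partition between the two faces can avoid $A_1^{+}\cup A_1^{-}$; assuming $m(\mathcal{U})\leq n$ does not repair this for $n\geq 2$, since that still allows two members to intersect. Consequently the partitions $L_i$ you want do not exist in general, and, as you yourself observe, even if they did, the resulting conclusion (a point lying only in members that avoid all faces) carries no lower bound on the number of members through that point, so no contradiction with $m(\mathcal{U})\leq n$ would follow from this part alone.

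The second half is the right kind of idea (nerve of a slightly thickened cover of multiplicity $\leq n$, mapped into $\partial I^{n}$, against the no-retraction theorem), and the thickening step is indeed justified by compactness, but the step you defer --- choosing the vertex images so that every simplex of the nerve is carried into a single face of $I^{n}$ and the boundary restriction has degree one --- is precisely the whole content of the theorem, and the recipe you indicate does not deliver it: if for each $V_j$ one merely records some sign vector $\sigma_j\in\{+,-\}^{n}$ of avoided faces, then $n$ members with a common point need not agree in any coordinate (already for $n=2$ the vectors $(+,-)$ and $(-,+)$ agree nowhere), so the affine image of the corresponding $(n-1)$-simplex of the nerve need not lie in $\partial I^{n}$. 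Until that combinatorial step is actually carried out (for instance by ordering the members and assigning each to the first face it meets, or by invoking a Knaster--Kuratowski--Mazurkiewicz-type lemma for the cube), the proposal is a plan rather than a proof. Since the paper only ever uses this lemma as a citation, the honest options are either to cite Engelking as the paper does or to write the classical argument out in full.
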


\begin{lem}\rm
\label{lem:greater}
 $ad_{(2^{k}\ZZ)^{n}}(2^{k+1})\geq n$ for every $k,n\in \mathbb{N}$.
\end{lem}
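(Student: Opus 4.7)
The plan is to argue by contradiction: suppose some uniformly bounded cover $\mathcal{U}$ of $(2^k\mathbb{Z})^n$ has $L(\mathcal{U})\geq 2^{k+1}$ and $m(\mathcal{U})\leq n$, and I will construct from it a finite closed cover of a large cube in $\mathbb{R}^n$ that violates Lemma \ref{lem:Lebesgue}.

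First I will use the Lebesgue hypothesis to pick, for every lattice point $a\in(2^k\mathbb{Z})^n$, some $U_a\in\mathcal{U}$ with $a+\{-2^k,0,2^k\}^n\subseteq U_a$; this is possible because in the max metric $B(a,2^{k+1})\cap(2^k\mathbb{Z})^n$ is exactly this $3^n$-neighborhood. Letting $D$ bound the diameters in $\mathcal{U}$, I choose $N\in\mathbb{N}$ with $N\cdot 2^k>D$, set $A=(2^k\mathbb{Z})^n\cap[0,N\cdot 2^k]^n$, and to each $a\in A$ attach the closed box $V_a=(a+[-2^{k-1},2^{k-1}]^n)\cap[0,N\cdot 2^k]^n$. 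The $V_a$'s cover $[0,N\cdot 2^k]^n$. Finally, for each $U$ in the finite family $\{U_a:a\in A\}$, set $W_U=\bigcup\{V_a:a\in A,\,U_a=U\}$; this is closed, and $\{W_U\}$ is a finite closed cover of $[0,N\cdot 2^k]^n$.

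The key geometric check is that no $W_U$ meets opposite faces of $[0,N\cdot 2^k]^n$. Indeed, $W_U$ intersects the face $\{t_i=0\}$ only when some $a\in A$ with $U_a=U$ has $a_i=0$, and symmetrically for $\{t_i=N\cdot 2^k\}$; meeting both would force $\text{diam}\,U\geq N\cdot 2^k>D$, a contradiction. Lemma \ref{lem:Lebesgue}, applied after affine rescaling of $[0,N\cdot 2^k]^n$ onto $I^n$, then produces a point $x$ lying in $n+1$ distinct sets $W_{U_1},\ldots,W_{U_{n+1}}$.

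To finish, I will use the Lebesgue hypothesis a second time to collapse this to a contradiction at a single lattice point. For each $j$, pick $a_j\in A$ with $U_{a_j}=U_j$ and $x\in V_{a_j}$; because each $V_{a_j}$ is the max-metric ball of radius $2^{k-1}$ about $a_j$, the $a_j$'s lie pairwise within max-distance $2^k$. But then $a_1\in a_j+\{-2^k,0,2^k\}^n\subseteq U_{a_j}=U_j$ for every $j$, so the single lattice point $a_1$ lies in $n+1$ distinct members of $\mathcal{U}$, contradicting $m(\mathcal{U})\leq n$.

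The main subtlety is the mismatch between continuous and coarse multiplicity. Naive thickening of $\mathcal{U}$ by $2^{k-1}$ inflates the multiplicity of the resulting closed cover at grid boundaries by a factor of up to $2^n$, which is too lossy to reach $n+1$. The remedy is to index the closed cover by the choice function $a\mapsto U_a$ rather than by individual members of $\mathcal{U}$, so that the $n+1$ overlapping cube-cells produced by Engelking's lemma all funnel down to a single lattice point $a_1$ where the $n+1$ distinct $U_j$'s genuinely meet.
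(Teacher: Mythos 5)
Your proof is correct and follows essentially the same route as the paper's: argue by contradiction, turn the coarse cover into a finite closed cover of a large cube no member of which meets opposite faces, invoke Lemma \ref{lem:Lebesgue}, and pull the resulting $(n+1)$-fold intersection point back to a single lattice point lying in $n+1$ distinct members of $\mathcal{U}$. The only difference is bookkeeping: you index the closed cover by a Lebesgue-number choice function $a\mapsto U_a$ on grid cells, whereas the paper shrinks each $U$ to $N_{-2^k}(U)$ and then takes closed $2^{k-1}$-neighborhoods, which accomplishes the same thing.
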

\begin{proof}
Suppose that $ad_{(2^{k}\ZZ)^{n}}(2^{k+1})\leq n-1$. Then there exist $B>0$ and $B$-bounded cover $\mathcal{U}$ of $(2^k\mathbb{Z})^n$ such that $L(\mathcal{U})\geq2^{k+1}$ and $m(\mathcal{U})\leq n$.

Let $N_{-2^k}(U)=U\setminus\overline{ N_{2^k}(X\setminus U)}=\{x\in U~|~d(x,X\setminus U)>2^k\}$ for each $U\in\mathcal{U}$.
Let $\mathcal{V}_{1}=\{N_{-2^k}(U)~|~U\in\mathcal{U}\}$. Since $L(\mathcal{U})\geq2^{k+1}$ and $m(\mathcal{U})\leq n$,  $\mathcal{V}_{1}$ is a $B$-bounded cover of $(2^k\mathbb{Z})^n$ such that $m(\mathcal{V}_{1})\leq n$.

Let
\[
\mathcal{V}=\{V\in\mathcal{V}_{1}~|~V\cap[0,B+2^{k+1}]^n\cap(2^k\mathbb{Z})^n\neq\emptyset\}.
\]
Since $[0,B+2^{k+1}]^n\cap(2^k\mathbb{Z})^n$ is a finite set and $m(\mathcal{V}_{1})\leq n$, $\mathcal{V}$ is a finite $B$-bounded cover of $[0,B+2^{k+1}]^n\cap(2^k\mathbb{Z})^n$ such that $m(\mathcal{V})\leq n$

Let
\[
\mathcal{W}=\{\overline{N_{2^{k-1}}(V)}~|~V\in\mathcal{V}\}.
\]
Then $\mathcal{W}$ is a finite closed and $(B+2^{k})$-bounded cover of $[0,B+2^{k+1}]^n$. Since each $W\in \mathcal{W}$ can not meet two opposite faces of $[0,B+2^{k+1}]^n$, then by Lemma \ref{lem:Lebesgue}, there exists $x\in [0,B+2^{k+1}]^n$ such that $x\in \bigcap_{i=1}^{n+1}\overline{N_{2^{k-1}}(V_{i})}$ for some $\overline{N_{2^{k-1}}(V_1)},\ldots,\overline{N_{2^{k-1}}(V_{n+1})} \in \mathcal{W}$. By definition, $V_i=N_{-2^k}(U_i)$ for some $U_i\in \mathcal{U}$, so  $d(x,N_{-2^k}(U_i))=d(x,V_i)\leq 2^{k-1}$ and hence $x\in U_i$, for $i\in\{1,\ldots,n+1\}$.
Therefore, $x\in \bigcap_{i=1}^{n+1}U_i$ which is a contradiction with $m(\mathcal{U})\leq n$.

\end{proof}
Since $ad_{X}(\lambda)$ is monotone and $ad_{X}(\lambda)\leq $asdim$(X)$ for every $\lambda>0$, we obtain the following result.

\begin{coro}
\label{coro1}
Let $k,n\in\NN$, $ad_{(2^{k}\ZZ)^n}(r)=n$ for every $r\geq2^{k+1}$.
\end{coro}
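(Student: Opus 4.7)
The plan is to combine Lemma \ref{lem:greater} with the two general properties of $ad_{X}(\cdot)$ that are mentioned in the sentence preceding the corollary, namely monotonicity and the upper bound by $\text{asdim}(X)$.

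First I would establish the lower bound. Since $ad_{X}(\lambda)$ is monotone in $\lambda$ (any cover witnessing Lebesgue number $\geq r$ also witnesses Lebesgue number $\geq 2^{k+1}$ when $r \geq 2^{k+1}$, so the infimum over the smaller collection of admissible covers is at least as large), we have
\[
ad_{(2^{k}\ZZ)^{n}}(r) \;\geq\; ad_{(2^{k}\ZZ)^{n}}(2^{k+1}) \;\geq\; n
\]
for every $r \geq 2^{k+1}$, where the second inequality is exactly Lemma \ref{lem:greater}.

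Next I would obtain the matching upper bound. By Example 2.1(2), $\text{asdim}((2^{k}\ZZ)^{n}) = n$, and the general inequality $ad_{X}(\lambda) \leq \text{asdim}(X)$ (valid for every $\lambda > 0$, since an $(n+1)$-multiplicity uniformly bounded cover with arbitrary Lebesgue number exists whenever $\text{asdim}(X) \leq n$) gives $ad_{(2^{k}\ZZ)^{n}}(r) \leq n$ for every $r > 0$. Combining the two estimates yields the desired equality.

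There is essentially no obstacle here: once Lemma \ref{lem:greater} is in hand, the corollary is a one-line sandwich argument, so the only thing to be careful about is citing monotonicity and the $\text{asdim}$ upper bound correctly. If one wanted to be completely self-contained, the monotonicity claim follows immediately from the definition of $ad_{X}$ as an infimum taken over a set of covers that shrinks as $\lambda$ grows.
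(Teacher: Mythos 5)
Your proposal is correct and is exactly the argument the paper intends: the lower bound comes from Lemma \ref{lem:greater} together with monotonicity of $ad_{X}(\cdot)$, and the upper bound from $ad_{X}(\lambda)\leq\text{asdim}(X)=n$ via Example 2.1(2). The paper states this sandwich in the single sentence preceding the corollary, so there is nothing to add.
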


Let $g:\RR^{+}\rightarrow\ZZ^{+}$ be an increasing function satisfying $\lim_{r\to +\infty}g(r)=+\infty$.
By Corollary \ref{coro1}, $ad_{(2^k\mathbb{Z})^{g(r)}}(r)=g(r)$ for every $k\leq\text{log}_{2}r-1$.
Let
$$\tilde{g}(r)=\max\{k\in\mathbb{N}~|~ad_{(2^k\mathbb{Z})^{g(r)}}(r)=g(r)\}.$$
Then $\tilde{g}(r)\geq[\text{log}_{2}r-1]>\text{log}_{2}r-2$.
Therefore $\tilde{g}(r)$ is an positive integer value  function with $\lim_{r\to +\infty}\tilde{g}(r)=+\infty$.
Define \[X_{\omega}(g)=as\bigsqcup_{i=1}^{\infty}(2^{\tilde{g}(i)}\mathbb{Z})^{g(i)}.\]

\begin{prop}
\label{omegawithanyfunctiongrowth}
For any increasing function $g:\RR^{+}\rightarrow\ZZ^{+}$ with $\lim_{r\to +\infty}g(r)=+\infty$, trasdim$(X_{\omega}(g))=\omega$ and $ad_{X_{\omega}(g)}(r)\geq g(r)$ for every $r>0$.
\end{prop}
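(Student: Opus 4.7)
The strategy is to exploit the fibered structure of $X_{\omega}(g)=\text{as}\bigsqcup_{i=1}^{\infty}(2^{\tilde g(i)}\mathbb Z)^{g(i)}$ and prove the three assertions separately: (a) $\text{trasdim}(X_{\omega}(g))\geq\omega$, (b) $\text{trasdim}(X_{\omega}(g))\leq\omega$, and (c) $ad_{X_{\omega}(g)}(r)\geq g(r)$ for every $r>0$.

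For (a) I would invoke the remark after Definition~2.3, which asserts $\text{trasdim}(X)\leq n$ iff $\text{asdim}(X)\leq n$, to reduce to showing $\text{asdim}(X_{\omega}(g))=\infty$. Since each fiber $(2^{\tilde g(i)}\mathbb Z)^{g(i)}$ sits isometrically as a subspace of $X_{\omega}(g)$ and has asymptotic dimension $g(i)$, and $g(i)\to\infty$ by hypothesis, the conclusion is immediate.

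For (b) I would work through $\text{coasdim}$ and then apply Proposition~\ref{proprelation}. Given $r>0$, the estimate $\tilde g(i)>\log_{2}i-2$ implies $\tilde g(i)\to\infty$, so one may pick $N\in\mathbb N$ with $2^{\tilde g(i)}>r$ for every $i>N$. Within each such tail fiber distinct points are already more than $r$ apart, and distinct tail fibers are further separated by the offset of the asymptotic-union metric; hence
\[
\mathcal U_{0}=\bigl\{\{x\}\ :\ x\in\text{as}\bigsqcup_{i>N}(2^{\tilde g(i)}\mathbb Z)^{g(i)}\bigr\}
\]
is $r$-disjoint and uniformly bounded. Its complement equals $\text{as}\bigsqcup_{i=1}^{N}(2^{\tilde g(i)}\mathbb Z)^{g(i)}$, a finite asymptotic union of spaces of finite asymptotic dimension, hence by Lemma~\ref{lem:coasdim1} has $\text{coasdim}\leq g(N)<\omega$. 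This gives $\text{coasdim}(X_{\omega}(g))\leq\omega$ and therefore $\text{trasdim}(X_{\omega}(g))\leq\omega$.

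For (c) the plan is to produce a subspace of $X_{\omega}(g)$ whose asymptotic dimension function at scale $r$ is at least $g(r)$. By the very definition of $\tilde g(r)$, one has $ad_{(2^{\tilde g(r)}\mathbb Z)^{g(r)}}(r)=g(r)$; when $r\in\mathbb N$ this space is literally the $r$-th fiber of $X_{\omega}(g)$, so monotonicity of $ad$ under restriction to subspaces immediately yields $ad_{X_{\omega}(g)}(r)\geq g(r)$. For a general real $r>0$, I would choose an integer $i$ with $g(i)\geq g(r)$ and $r\geq 2^{\tilde g(i)+1}$, then apply Corollary~\ref{coro1} to the subspace $(2^{\tilde g(i)}\mathbb Z)^{g(r)}$ of the $i$-th fiber, obtained by projecting to the first $g(r)$ coordinates, to conclude that its $ad$ at scale $r$ equals $g(r)$. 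The main obstacle will be arranging the simultaneous inequalities $g(i)\geq g(r)$ and $r\geq 2^{\tilde g(i)+1}$ for a single integer $i$: using the explicit form $\tilde g(i)\approx\lfloor\log_{2}i\rfloor-1$, the second condition forces $i<2^{\lfloor\log_{2}r\rfloor+1}$, while the first forces $i\geq r$ when $g$ has jumped up at $r$, so one must verify that the window $[r,2^{\lfloor\log_{2}r\rfloor+1})$ contains an integer, with extra care in the borderline situation where $r$ lies just below a power of $2$ and a jump of $g$ occurs there.
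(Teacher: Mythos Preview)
Your plan for parts (a) and (b), and for part (c) at integer scales, is exactly the paper's argument: build the singleton family on the tail fibers to get $\text{coasdim}=\omega$, pass to $\text{trasdim}$ via Proposition~\ref{proprelation}, and for the growth bound invoke the defining property $ad_{(2^{\tilde g(r)}\mathbb Z)^{g(r)}}(r)=g(r)$ together with monotonicity of $ad$ under passage to subspaces.

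Where you diverge is the treatment of non-integer $r$. The paper does not split cases: it simply writes $ad_{X_{\omega}(g)}(r)\geq ad_{(2^{\tilde g(r)}\mathbb Z)^{g(r)}}(r)=g(r)$ for every $r>0$ and moves on. Your attempt to locate an integer fiber witnessing the bound is more scrupulous, but the route you sketch has its own gap: you use ``$\tilde g(i)\approx\lfloor\log_2 i\rfloor-1$'' as if it were a two-sided estimate, whereas the paper only records the \emph{lower} bound $\tilde g(r)>\log_2 r-2$; no upper bound on $\tilde g$ is established, so you cannot conclude from $r\geq 2^{\tilde g(i)+1}$ that $i$ lies in any explicit window like $[r,2^{\lfloor\log_2 r\rfloor+1})$. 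If you want to close this cleanly you should instead argue directly that $(2^{\tilde g(r)}\mathbb Z)^{g(r)}$ embeds isometrically in some integer fiber---for instance by showing $\tilde g$ is nondecreasing in $r$ (so $\tilde g(\lfloor r\rfloor)\leq\tilde g(r)$) and using $g(\lceil r\rceil)\geq g(r)$, or by simply taking $i=\lfloor r\rfloor$ and checking from the definition of $\tilde g$ that $(2^{\tilde g(r)}\mathbb Z)^{g(r)}\subseteq(2^{\tilde g(i)}\mathbb Z)^{g(i)}$---rather than trying to force $r\geq 2^{\tilde g(i)+1}$.
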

\begin{proof}
For every $r>0$, since $\lim_{i\to +\infty}\widetilde{g}(i)=+\infty$, there exists $M\in\NN$ such that $\tilde{g}(i)>r$ when $i\geq M$. Then
$$\mathcal{U}=\{\{x\}|x\in as\bigsqcup_{i=M}^{\infty}(2^{\tilde{g}(i)}\mathbb{Z})^{g(i)}\},$$
is an $r$-disjoint and uniformly bounded family of subsets. Since
\[\text{
asdim$(X_{\omega}(g)\setminus(\bigcup\mathcal{U}))=\text{asdim}(as\bigsqcup_{i=1}^{M-1}(2^{\tilde{g}(i)}\mathbb{Z})^{g(i)})<\infty$ and asdim$(X_{\omega}(g))=\infty$,
}\]
coasdim$(X_{\omega}(g))=\omega$. By Proposition \ref{proprelation}, trasdim$(X_{\omega}(g))=\omega$ . While by the definition of $\tilde{g}$,
\[
ad_{X_{\omega}(g)}(r)\geq ad_{(2^{\tilde{g}(r)}\mathbb{Z})^{g(r)}}(r)=g(r).
\]
\end{proof}
\begin{remark}
Let $g(r) =ad_{Y_{2\omega}}(r)+1$, then
\[\text{
$ad_{X_{\omega}(g)}(r)\geq g(r) > ad_{Y_{2\omega}}(r)$.
}\] But trasdim$(Y_{2\omega})=2$trasdim$(X_{\omega}(g))$.
\end{remark}
\

In fact, we can construct a metric space $X$ with any asymptotic dimension growth with trasdim$(X)=\omega$. A similar result holds for asymptotic dimension growth and finite decomposition complexity which is introduced in \cite{Yu2013}.\

\begin{defi}
(\cite{Yu2013})\\
(1) Let $\mathcal{D}_0$ be the collection of bounded families: $\mathcal{D}_0 = \{\mathcal{X} : \mathcal{X} \text{ is bounded}\}$.\\
(2) Let $\alpha$ be an ordinal greater than 0, let $\mathcal{D}_{\alpha}$ be the collection of metric families decomposable
over $\bigcup_{\beta<\alpha}\mathcal{D}_{\beta}:$
$$ \mathcal{D}_{\alpha} = \{\mathcal{X} : \forall~ r > 0, \exists~ \beta<\alpha, \exists ~\mathcal{Y}\in \mathcal{D}_{\beta}, \text{ such that } \mathcal{X}\stackrel{r}{\rightarrow} \mathcal{Y}\}.$$
\end{defi}

\begin{prop}
\label{omegawithanyfunctiongrowth2}
For any positive integer value increasing function $g$ with $\lim_{r\to +\infty}g(r)=+\infty$, $X_{\omega}(g)\in\mathcal{D}_{\omega}$ and $ad_{X_{\omega}(g)}(r)\geq g(r)$ for every $r>0$.
\end{prop}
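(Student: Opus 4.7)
The second assertion $ad_{X_{\omega}(g)}(r)\geq g(r)$ has already been established in the proof of Proposition~\ref{omegawithanyfunctiongrowth}, so only the first assertion $X_{\omega}(g)\in\mathcal{D}_{\omega}$ requires new work. My plan is to reuse, essentially verbatim, the decomposition already built in that proof, and then verify it witnesses membership in $\mathcal{D}_{\omega}$ rather than in the coasdim hierarchy.

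Fix $r>0$. Since $\widetilde{g}(i)\to+\infty$, there exists $M=M(r)\in\mathbb{N}$ with $\widetilde{g}(i)>r$ for every $i\geq M$. Let
\[
T:=\mathrm{as}\bigsqcup_{i=M}^{\infty}(2^{\widetilde{g}(i)}\mathbb{Z})^{g(i)},\qquad H:=\mathrm{as}\bigsqcup_{i=1}^{M-1}(2^{\widetilde{g}(i)}\mathbb{Z})^{g(i)}.
\]
As in Proposition~\ref{omegawithanyfunctiongrowth}, distinct points of $T$ are more than $r$ apart, so the family of singletons of $T$ is $r$-disjoint and uniformly bounded, and $H$ embeds in $\mathrm{as}\bigsqcup_{i=1}^{M-1}\mathbb{R}^{g(i)}$, hence has finite asymptotic dimension bounded by $N:=g(M-1)$.

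Now set $\mathcal{Y}_r:=\{\{x\}:x\in T\}\cup\{H\}$. The decomposition $X_{\omega}(g)=T\sqcup H$ exhibits an $r$-decomposition $\{X_{\omega}(g)\}\xrightarrow{r}\mathcal{Y}_r$: the tail $T$ is an $r$-disjoint union of singletons from $\mathcal{Y}_r$, while $H$ is itself a single member of $\mathcal{Y}_r$ (a trivial $r$-disjoint union of one piece). It remains to show $\mathcal{Y}_r\in\mathcal{D}_{\beta}$ for some $\beta<\omega$. The singleton subfamily lies in $\mathcal{D}_0$, and since $\mathrm{asdim}(H)\leq N$, the standard fact that finite asymptotic dimension implies finite decomposition complexity (proved by peeling off one $s$-disjoint uniformly bounded subfamily at each of $N+1$ levels, \emph{\`a la} Guentner--Tessera--Yu) gives $\{H\}\in\mathcal{D}_N$. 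Because $\mathcal{D}_N$ is closed under union of metric families, $\mathcal{Y}_r\in\mathcal{D}_N$, whence $X_{\omega}(g)\in\mathcal{D}_{\omega}$ by definition.

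The only mild obstacle is the implication \emph{$\mathrm{asdim}\leq N\Rightarrow\{\cdot\}\in\mathcal{D}_N$}, which is not stated explicitly in the excerpt; I would therefore either include a brief inductive justification based on Definition~2.1 (peeling off $\mathcal{U}_0$ and applying the induction hypothesis to the complement inside $\bigcup_{i\geq 1}\mathcal{U}_i$) or cite the relevant result from \cite{Yu2013}. Everything else is a direct reading of the Proposition~\ref{omegawithanyfunctiongrowth} construction through the lens of the $\mathcal{D}_\alpha$ hierarchy.
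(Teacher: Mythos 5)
Your proposal is correct and follows essentially the same route as the paper: the same choice of $M$, the same splitting of $X_{\omega}(g)$ into the tail of $r$-separated singletons and the finite-asymptotic-dimension head, and the same conclusion via an $r$-decomposition over a family in some $\mathcal{D}_n$ with $n<\omega$. The only difference is that you make explicit the step ``$\mathrm{asdim}\leq N$ implies membership in $\mathcal{D}_N$,'' which the paper simply asserts; that added care does not change the argument.
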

\begin{proof}
We only need to prove $X_{\omega}(g)\in\mathcal{D}_{\omega}$. For every $r>0$, since $\lim_{i\to +\infty}\widetilde{g}(i)=+\infty$, there exists $M\in\NN$ such that $\tilde{g}(i)>r$ when $i\geq M$. Then
$$\mathcal{U}=\{\{x\}|x\in as\bigsqcup_{i=M}^{\infty}(2^{\tilde{g}(i)}\mathbb{Z})^{g(i)}\},$$
is an $r$-disjoint and uniformly bounded family of subsets.
Let $\mathcal{V}=\mathcal{U}\cup \{as\bigsqcup_{i=1}^{M-1}X_{\omega}^{(i)}(g)\}$.
Then $X_{\omega}(g)\stackrel{r}{\longrightarrow}\mathcal{V}$.
Since $as\bigsqcup_{i=1}^{M-1}X_{\omega}^{(i)}(g)$ has finite asymptotic dimension, $\mathcal{V}\in \mathcal{D}_{n}$ for some $n\in\NN$.
Hence $X_{\omega}(g)\in \mathcal{D}_{\omega}.$

\end{proof}

\end{section}

\providecommand{\bysame}{\leavevmode\hbox to3em{\hrulefill}\thinspace}
\providecommand{\MR}{\relax\ifhmode\unskip\space\fi MR }
\providecommand{\MRhref}[2]{%
  \href{http://www.ams.org/mathscinet-getitem?mr=#1}{#2}
}
\providecommand{\href}[2]{#2}

\end{document}